\documentclass[10pt]{amsart}
\usepackage[letterpaper, margin=1.5in]{geometry}
\usepackage{graphicx}
\usepackage{amsmath,amssymb,amsthm,mathtools}
\usepackage[all]{xy}
\usepackage[utf8]{inputenc}
\usepackage[hidelinks]{hyperref}
\usepackage{enumitem}

\newtheorem{theorem}{Theorem}[section]

\newtheorem{remark}[theorem]{Remark}

\newtheorem{lemma}[theorem]{Lemma}
\newtheorem{proposition}[theorem]{Proposition}

\newtheorem*{theorem*}{Theorem}
\newtheorem*{question*}{Question}
\setcounter{tocdepth}{1}
\begin{document}

\title{Some properties of a Brauer class}
\date{\today}

\author{Qixiao Ma}
\address{Department of Mathematics, Columbia University,}
\email{qxma10@math.columbia.edu}

\begin{abstract}
Let $X$ be a smooth proper curve defined over a field $k$. The representability of the relative Picard functor is obstructed by a class $\alpha\in\mathrm{Br}(\mathrm{Pic}_{X/k})$. We show the associated division algebra on $\mathrm{Pic}^0_{X/k}$ has natural involutions. We show the class $\alpha$ splits at some height one points in $\mathrm{Pic}_{X/k}$. \end{abstract}
\maketitle

\tableofcontents

\section{Introduction}
Let $k$ be a field, let $X$ be a projective variety defined over $k$. Let $M$ be the moduli space of stable sheaves on $X$ with fixed rank and chern classes. The existence of tautological sheaves on $X\times_k M$ is obstructed by a Brauer class $\alpha\in\mathrm{Br}(M)$, see \cite[I.3.3]{caldararu}.

Brauer classes over fields are uniquely represented by central division algebras. Algebraists are still working on their structure theory, and asked if certain examples exist, see \cite{AESU}. It is interesting to check if the aforementioned Brauer classes, when restricted to the generic points of $M$, would provide some useful examples.

The coarsest invariants of a Brauer class are its period and index, this has been widely studied, for example see \cite[4]{AESU}. In this paper, we study some slightly more refined properties of the aforementioned Brauer classes.

We focus on the simplest case: The variety $X$ is a smooth proper curve. The moduli space $M=\mathrm{Pic}^d_{X/k}$ parameterizes degree-$d$ line bundles on $X$. Let $\alpha_d\in\mathrm{Br}(M)$ be the obstruction class. Since $M$ is regular, by \cite[IV.2.6]{milneet}, we know $\alpha_d$ is determined by its restriction to the function field $k(M)$, which we also denote by $\alpha_d$.

The period and index of $\alpha_d$ can be determined in certain cases. When $X$ is the universal genus $g$ curve in characteristic zero, the period $\mathrm{per}(\alpha_d)=(2g-2,d-g+1)$, see \cite[6.4]{melo}. Let $L$ be a tautological line bundle on $(X\times M)_{k^{\mathrm{sep}}}$, then $\alpha_d$ is represented by the Brauer-Severi variety descended from the generic fiber of $\mathbb{P}(\mathrm{pr}_{2,*}(\mathrm{pr}_1^*\omega_X\otimes L)^\vee)$, see \cite[4.9]{giraud}. The Brauer-Severi variety has dimension $(g-2+d)$, so the index $\mathrm{ind}(\alpha_d)$ divides $(g-1+d)$. Since period always divides index, we know $\mathrm{ind}(\alpha_0)=g-1$ and $\mathrm{ind}(\alpha_{g-1})=2g-2$.

In section \ref{sec3}, we study existence of involution on the division algebras:
\begin{theorem*} The division algebra representing $\alpha_0$ has involutions of second kind that extends the natural involution on $\mathrm{Pic}^0_{X/k}$.
\end{theorem*}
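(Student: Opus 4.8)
Write $A=\mathrm{Pic}^0_{X/k}$, let $\iota=[-1]\colon A\to A$ be the inversion, and let $\sigma=\iota^*$ be the induced automorphism of $K=k(A)$. Since $\dim A=g\ge 1$ forces $\iota\ne\mathrm{id}$, the extension $K/K_0$ with $K_0=K^{\sigma}$ is quadratic and separable Galois, and $\sigma$ is exactly the field automorphism extending the natural involution of $\mathrm{Pic}^0_{X/k}$. Let $D$ be the division algebra representing $\alpha_0$ over $K$. By the theorem of Albert--Riehm--Scharlau (see Knus--Merkurjev--Rost--Tignol, \emph{The Book of Involutions}, Thm.~3.1), $D$ carries an involution of the second kind whose restriction to the center is $\sigma$ if and only if the corestriction $\mathrm{cor}_{K/K_0}(\alpha_0)$ vanishes in $\mathrm{Br}(K_0)$. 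The entire plan is therefore to prove this single vanishing.

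First I would compute $\alpha_0$ through the transgression in the Leray sequence for $\mathrm{pr}_2\colon X\times A\to A$. Using $\mathrm{pr}_{2*}\mathbb{G}_m=\mathbb{G}_m$ and $R^1\mathrm{pr}_{2*}\mathbb{G}_m=\mathrm{Pic}_{X/k}$, one obtains a homomorphism $d_2\colon \mathrm{Pic}_{X/k}(A)\to\mathrm{Br}(A)$, and $\alpha_0=d_2(\mathrm{id}_A)$, where $\mathrm{id}_A\in\mathrm{Hom}(A,\mathrm{Pic}^0)\subset\mathrm{Pic}_{X/k}(A)$ is the universal section. Because the spectral sequence is functorial in the base and $d_2$ is additive, pulling back along $\iota$ gives $\iota^*\alpha_0=d_2(\iota^*\mathrm{id}_A)=d_2([-1]_A)=-\alpha_0$; this reproves $\sigma_*\alpha_0=-\alpha_0$, so that $\mathrm{cor}_{K/K_0}(\alpha_0)$ already lies in $\ker(\mathrm{res}_{K/K_0})$, a $2$-torsion subgroup. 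This is weaker than what we want, and the real content is to obtain honest vanishing.

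To that end I would descend the corestriction to geometry. For $g\ge 2$ the quotient $\pi\colon A\to Y:=A/\langle\iota\rangle$ is finite flat of degree $2$ and \'etale in codimension one, since its branch locus $A[2]$ has codimension $g$; hence $\alpha_0\in\mathrm{Br}(A)$ has a corestriction $\mathrm{cor}_{A/Y}(\alpha_0)\in\mathrm{Br}(Y)$ restricting to $\mathrm{cor}_{K/K_0}(\alpha_0)$ on the generic point. The key input is a compatibility between corestriction and transgression: the transfer $\mathrm{Tr}_\pi\colon\mathrm{Pic}_{X/k}(A)\to\mathrm{Pic}_{X/k}(Y)$ on the representing sheaf should satisfy $\mathrm{cor}_{A/Y}\circ d_2^A=d_2^Y\circ\mathrm{Tr}_\pi$. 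Granting this, $\mathrm{cor}(\alpha_0)=d_2^Y(\mathrm{Tr}_\pi(\mathrm{id}_A))$, and $\mathrm{Tr}_\pi(\mathrm{id}_A)$ is the fiberwise sum of the universal section: over a point $\pi(a)\in Y$ the fiber is $\{a,-a\}$, whose sum in the group $A=\mathrm{Pic}^0$ is $a+(-a)=0$ (and $2a=0$ at the $2$-torsion points), so $\mathrm{Tr}_\pi(\mathrm{id}_A)=0$ and $\mathrm{cor}_{K/K_0}(\alpha_0)=0$, completing the proof.

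The main obstacle is precisely the compatibility $\mathrm{cor}\circ d_2=d_2\circ\mathrm{Tr}$: one must match the cohomological corestriction on $\mathrm{Br}$ with the norm on the Picard sheaf at the level of the Leray differential, which I expect to handle either by a direct \v{C}ech/norm computation on a trivializing \'etale cover of $\pi$ or by invoking the general compatibility of transfers with the five-term sequence of a finite flat morphism. As an alternative that sidesteps corestriction, one can try to build the involution by hand from a nondegenerate $\sigma$-sesquilinear pairing on the tautological bundle $\mathrm{pr}_{2*}(\mathrm{pr}_1^*\omega_X\otimes L)$ arising from relative Serre duality together with the inversion-duality $(1\times\iota)^*L\cong L^\vee$; but making that pairing descend to $k$ and verifying it squares to the identity runs into the same arithmetic input, so I would pursue the transfer computation as the primary route.
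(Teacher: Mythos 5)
Your route is genuinely different from the paper's, and its core idea is sound. Both arguments reduce, via Albert--Riehm--Scharlau, to the vanishing of $\mathrm{cor}_{K/K_0}(\alpha_0)$; you then propose to compute the corestriction formally, using that it intertwines the transgression $d_2^{0,1}$ with the norm on $H^0$ of the Picard sheaf, and that the norm of the tautological point is $[L]+[\sigma^*L]=[L]+[L^\vee]=0$. The paper instead verifies the criterion by explicitly building a $\sigma$-semilinear anti-automorphism: it represents $\alpha_0$ and $\sigma^*\alpha_0$ by $\mathrm{End}(\mathrm{H}^0(X_{K'},\omega\otimes L))$ and $\mathrm{End}(\mathrm{H}^0(X_{K'},\omega\otimes L^\vee))$, produces a Galois-equivariant perfect pairing between these spaces by composing multiplication of sections into $\mathrm{H}^0(\omega^{\otimes 2})$ with a generic linear form (perfection being a dense open condition by an incidence-variety dimension count), and descends. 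Your argument is shorter and purely cohomological, and it also disposes of the paper's remark on $\alpha_{g-1}$ for free: there the norm of the tautological point is $[\omega]$, which is the class of an honest line bundle on $X_{K_0}$ and hence is killed by $d_2^{0,1}$. What the paper's argument buys is an explicit, Serre-duality-flavored involution on a concrete algebra rather than a pure existence statement.

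Two points need attention before your sketch is a proof. First, drop the quotient $Y=A/\langle\iota\rangle$: the map $A\to Y$ is ramified along $A[2]$ and $Y$ is singular there, so a corestriction on $\mathrm{Br}$ of schemes along a finite flat non-\'etale map is machinery you do not want to build. Everything you need lives at the generic point, where $K/K_0$ is a separable quadratic extension and your ``main obstacle'' is the standard fact that corestriction for the open subgroup $G_K\leq G_{K_0}$ is a morphism of $\delta$-functors: write $d_2^{0,1}$ as the composite of the connecting maps attached to $0\to \bar K(X)^*/\bar K^*\to\mathrm{Div}(X_{\bar K})\to\mathrm{Pic}(X_{\bar K})\to0$ and $1\to\bar K^*\to\bar K(X)^*\to\bar K(X)^*/\bar K^*\to1$, and note that $\mathrm{cor}$ commutes with each connecting map and acts on $H^0$ as the norm. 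Second, the computation $N([L])=[L]+[\sigma^*L]=0$ uses the identification $\sigma^*L\cong L^\vee$, which is exactly what the paper checks in Lemma~\ref{alg2} (the restriction of $\sigma^*L\otimes L$ to $X_\eta$ is trivial); include that verification rather than treating it as obvious.
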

\noindent In section \ref{sec4}, we probe the class $\alpha_{g-1}$ at the generic point of theta divisor $\Theta\subset\mathrm{Pic}^{g-1}_{X/k}$:
\begin{theorem*} The Brauer class $\alpha_{g-1}\in\mathrm{Br}(\mathrm{Pic}^{g-1}_{X/k})$ restricts to zero in $\mathrm{Br}(k(\Theta))$.
\end{theorem*}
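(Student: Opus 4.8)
The plan is to interpret $\alpha_{g-1}$ as an obstruction class and to exhibit, over the generic point of $\Theta$, an honest line bundle representing the tautological Picard class, which forces the obstruction to vanish. Let $K$ be a field extension of $k$ and consider the structure map $f\colon X_K\to\mathrm{Spec}\,K$. The Leray spectral sequence for $\mathbb{G}_m$ along $f$, together with $R^1 f_*\mathbb{G}_m=\mathrm{Pic}_{X/k}$ and $\mathrm{Pic}(K)=0$, yields the low-degree exact sequence
$$0\to\mathrm{Pic}(X_K)\to\mathrm{Pic}_{X/k}(K)\xrightarrow{\ \delta\ }\mathrm{Br}(K)\to\mathrm{Br}(X_K).$$
By construction the value of $\alpha_{g-1}$ at a point $\eta\in\mathrm{Pic}^{g-1}_{X/k}$ with residue field $K=\kappa(\eta)$ is $\delta$ applied to the tautological class, i.e. to the $K$-point $\eta$ itself viewed in $\mathrm{Pic}_{X/k}(K)$. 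In particular $\alpha_{g-1}|_K=0$ precisely when that tautological class lies in the image of $\mathrm{Pic}(X_K)$, that is, when it is represented by a genuine line bundle on $X_K$. Applying this to the generic point $\eta_\Theta$ of $\Theta$ (a height one point of $\mathrm{Pic}^{g-1}_{X/k}$, since $\Theta$ is a divisor), it suffices to produce such a line bundle over $K=k(\Theta)$.

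To this end I would pass to the symmetric power. Let $u\colon\mathrm{Sym}^{g-1}X\to\mathrm{Pic}^{g-1}_{X/k}$ be the Abel--Jacobi morphism, sending an effective divisor of degree $g-1$ to its class; its image is the theta divisor $\Theta$. Since $\dim\mathrm{Sym}^{g-1}X=\dim\Theta=g-1$ and $u$ is dominant onto $\Theta$, the generic fibre of $u$ is finite; as the fibre over a class $[L]$ is the complete linear system $|L|=\mathbb{P}(H^0(L))$, finiteness forces $h^0(L)=1$ at the generic point, so that fibre is a single reduced point. Hence $u$ is birational onto $\Theta$ and induces an isomorphism $k(\Theta)\xrightarrow{\sim}k(\mathrm{Sym}^{g-1}X)$; I may therefore replace the generic point $\eta_\Theta$ by the generic point $\xi$ of $\mathrm{Sym}^{g-1}X$.

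Over the symmetric power the obstruction disappears for a formal reason: on $X\times\mathrm{Sym}^{g-1}X$ there is a universal effective Cartier divisor $\mathcal{D}$ of relative degree $g-1$, which is exactly the content of $\mathrm{Sym}^{g-1}X$ representing the functor of relative degree $g-1$ divisors and requires no rational point on $X$. Its associated invertible sheaf $\mathcal{O}(\mathcal{D})$ is an honest line bundle, and the Abel--Jacobi map $u$ is by definition the morphism classified by $\mathcal{O}(\mathcal{D})$. Restricting to the fibre over $\xi$ produces a line bundle $\mathcal{O}(\mathcal{D})|_{X_{k(\Theta)}}$ whose class in $\mathrm{Pic}_{X/k}(k(\Theta))$ is exactly the tautological class $u(\xi)=\eta_\Theta$. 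Thus the tautological class lies in $\mathrm{Pic}(X_{k(\Theta)})$, the map $\delta$ kills it, and $\alpha_{g-1}|_{k(\Theta)}=0$.

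The step I expect to require the most care is the compatibility in the last paragraph: one must check that the point of $\mathrm{Pic}_{X/k}(k(\Theta))$ cut out by $\eta_\Theta\hookrightarrow\mathrm{Pic}^{g-1}_{X/k}$ genuinely coincides with the class of $\mathcal{O}(\mathcal{D})$ on the generic fibre, so that evaluation of $\alpha_{g-1}$ at $\eta_\Theta$ agrees with $\delta$ of the universal line bundle under the identification $k(\Theta)=k(\mathrm{Sym}^{g-1}X)$. This is really the assertion that $\alpha_{g-1}$ represents the obstruction to lifting the tautological point of the Picard functor, so it amounts to unwinding the definition of $\alpha_{g-1}$ against the low-degree sequence above. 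The remaining geometric input---birationality of $u$, equivalently that the generic effective divisor of degree $g-1$ is the unique member of its linear system---is classical and follows from the dimension count already indicated.
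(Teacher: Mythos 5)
Your proposal is correct, and it reaches the conclusion by a genuinely different route from the paper. The paper works directly with the (twisted) sheaf $\pi'_*L'$ on an \'etale cover of $\Theta$: it proves this sheaf is torsion free of rank $1$ (Lemma \ref{rk1}), descends its projectivization $\widetilde{T}=\mathbb{P}(\pi'_*L')$ to a scheme over $\Theta$ via explicit covering/descent data (Lemma \ref{descend}), shows that $s^*L'\otimes r^*\mathcal{O}(-1)$ descends to a tautological line bundle on $X\times\widetilde{T}$ (Lemma \ref{Brauerann}), and finally uses that $\widetilde{T}\to\Theta$ is birational because the sheaf has generic rank $1$. You instead invoke $\mathrm{Sym}^{g-1}X=\mathrm{Div}^{g-1}_{X/k}$ with its universal effective divisor $\mathcal{D}$, which exists over $k$ with no descent argument needed, and replace the rank-one computation by the classical fact that the Abel--Jacobi map $u\colon\mathrm{Sym}^{g-1}X\to\Theta$ is birational (generic fibre $=\lvert L\rvert=\mathbb{P}^{h^0-1}$ forced to be a reduced point by the dimension count). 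These are really two descriptions of the same geometric object --- generically over $\Theta$, the symmetric power \emph{is} $\mathbb{P}(\pi'_*L')$ --- but your packaging is more elementary and self-contained, since representability of the divisor functor hands you the tautological line bundle for free. What the paper's heavier formalism buys is reusability: the descent lemmas and the rank-one argument carry over verbatim to the generalized theta divisor $\Theta_E$ for a rank-$2$ bundle $E$ in Section \ref{sec5}, where there is no symmetric-power model available. Two small points you should make explicit if you write this up: (i) the identification of $\alpha_{g-1}|_{\kappa(\eta)}$ with $\delta(\eta)$ for a point $\eta\in\mathrm{Pic}_{X/k}(K)$, which you correctly flag, follows from functoriality of the Leray spectral sequence under base change along $\eta$; and (ii) over the non-closed field $k(\Theta)$ the generic fibre of $u$ is a priori only a twisted form of $\mathbb{P}(H^0(L))$, but a zero-dimensional such form that is geometrically a single reduced point is necessarily $\mathrm{Spec}\,k(\Theta)$ itself, so birationality still holds.
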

\noindent In section \ref{sec5}, we show similar result holds for the specialization of $\alpha_0$ to generalized theta divisors $\Theta_E\subset\mathrm{Pic}^0_{X/k}$, where $E$ is a semi-stable vector bundle of rank $2$ and slope $g-1$ on $X$. We will work with universal genus $g$ curves, so that $\Theta_E$ is reduced irreducible.

Interesting questions rise in the study of the class $\alpha_0$. For example, note that the class $\alpha_0$ is represented by an Azumaya algebra away from the identity section, hence it is an unramified Brauer class in $\mathrm{Br}(\mathrm{Pic}^0_{X/k})$, one may ask:
\begin{question*} Does the division algebra of $\alpha_0$ contain an Azumaya order?
\end{question*}
Unramified division algebras without an Azumaya order has been constructed in \cite{Azuorder}. It would be interesting to know if the class $\alpha_0$ also provide such an example.
\vspace{3mm}

\textbf{Acknowledgements.} I am very grateful to my advisor Aise Johan de Jong for his invaluable ideas, enlightening discussions and unceasing encouragement. Thank Max Lieblich, Daniel Krashen and Asher Auel for helpful discussions on the existence of involutions.

\section{Preliminaries}We recall some facts from \cite{BLR}. All curves are assumed to be geometrically integral.
\subsection{The Picard scheme}
Let $k$ be a field, let $X$ be a smooth proper curve defined over $k$.
Consider the relative Picard functor $P'_{X/k}\colon \mathrm{Sch}/k\to\mathrm{Sets},\ T\mapsto \mathrm{Pic}(X\times_kT)/\mathrm{pr}_2^*\mathrm{Pic}(T)$. Let $P_{X/k}$ be the \'etale sheafification of $P'_{X/k}$. The functor $P_{X/k}\colon(\mathrm{Sch}/k)_{\acute{e}t}\to\mathrm{Sets}$ is represented by a group scheme $\mathrm{Pic}_{X/k}=\bigsqcup_{d\in\mathbb{Z}}\mathrm{Pic}^d_{X/k}$. The identity component $\mathrm{Pic}^0_{X/k}$ is an abelian variety, the components $\mathrm{Pic}^d_{X/k}$ are torsors of $\mathrm{Pic}^0_{X/k}$.

The representability of $P_{X/k}$ means there exists a tautological line bundle on $X\times_kU$, where $U$ is some \'etale cover of $\mathrm{Pic}_{X/k}$. The representability of $P'_{X/k}$ means there exists a tautological line bundle on $X\times_k\mathrm{Pic}_{X/k}$ (e.g. this is true when $X$ has a $k$-rational point).

\subsection{The Brauer class}
Consider the Leray spectral sequence associated with the projection $\pi\colon X\times\mathrm{Pic}_{X/k}\to\mathrm{Pic}_{X/k}$, the low-degree terms fit into an exact sequence $$0\to\mathrm{Pic}(\mathrm{Pic}_{X/k})\overset{\pi^*}{\to}\mathrm{Pic}(X\times\mathrm{Pic}_{X/k})\to\mathrm{Mor}(\mathrm{Pic}_{X/k},\mathrm{Pic}_{X/k})\overset{\mathrm{d}_2^{0,1}}{\to} \mathrm{Br}(\mathrm{Pic}_{X/k}).$$ The obstruction to the existence of a tautological line bundle on $X\times \mathrm{Pic}_{X/k}$ is given by a class $$\alpha:=\mathrm{d}_2^{0,1}(1_{\mathrm{Pic}_{X/k}})\in\mathrm{Br}(\mathrm{Pic}_{X/k}).$$
Let $d$ be an integer. We denote the restriction of $\alpha$ to $\mathrm{Pic}^d_{X/k}$ by $\alpha_d$.

\section{Involution}\label{sec3}
Let $k$ be a field, let $G=\mathrm{Gal}(k^{\mathrm{sep}}/k)$. Let $X$ be a proper smooth genus $g$ curve defined over $k$. There exists a natural involution $\sigma\colon\mathrm{Pic}^0_{X/k}\to\mathrm{Pic}^0_{X/k}$ induced by taking dual of line bundles. Let's denote the function field of $\mathrm{Pic}^0_{X/k}$ by $K$, and denote the induced involution on $K$ still by $\sigma$.
The obstruction class $\alpha_0\in\mathrm{Br}(\mathrm{Pic}^0_{X/k})$ is uniquely represented by a division algebra $D$ over $K$. We show $D$ has involutions of second kind extending $\sigma$.

\subsection{Involution of second kind}
Let $E$ be a field with a nontrivial involution $\iota$.
Let $A$ be a central simple algebra defined over $E$.
We say $A$ has an involution of second kind extending $\iota$, if there exists a $\iota$-semilinear ring isomorphism $\widetilde{\iota}\colon A\cong A^{opp}$, such that $\widetilde{\iota}^2=\mathrm{id}_A$.

\begin{lemma}The existence of involutions
 of second kind on a central simple algebra can be examined on any central simple algebra in the same Brauer class.
\end{lemma}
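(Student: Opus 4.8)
The plan is to reformulate the existence of an involution of the second kind as a condition depending only on the Brauer class, so that it is automatically shared by Brauer-equivalent algebras. Write $F=E^{\iota}$ for the fixed field and let $\overline{A}$ denote the conjugate algebra, that is, $A$ with its $E$-structure twisted by $\iota$ (so that $e\in E$ acts on $\overline{A}$ the way $\iota(e)$ acts on $A$). Unwinding the definition, an $\iota$-semilinear ring isomorphism $\widetilde{\iota}\colon A\to A^{opp}$ is exactly the same datum as an $E$-algebra isomorphism $\phi\colon A\to\overline{A}^{opp}$: the $\iota$-semilinearity becomes honest $E$-linearity once the source is twisted, and the anti-multiplicativity is absorbed by passing to the opposite algebra. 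The extra requirement $\widetilde{\iota}^{2}=\mathrm{id}_{A}$ is precisely what promotes such a $\phi$ to an involution. First I would record this dictionary carefully, since it converts a statement about involutions into a statement about isomorphism classes of central simple $E$-algebras.

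Second, I would observe that a bare $E$-algebra isomorphism $\phi\colon A\xrightarrow{\sim}\overline{A}^{opp}$ exists if and only if $A$ and $\overline{A}^{opp}$ are isomorphic; as these have equal degree, this holds if and only if they lie in the same Brauer class, i.e. $[A]=[\overline{A}^{opp}]=-\iota_{*}[A]$ in $\mathrm{Br}(E)$, where $\iota_{*}$ is the automorphism of $\mathrm{Br}(E)$ induced by $\iota$. The condition $[A]+\iota_{*}[A]=0$ involves only the class $[A]$. Hence if $B$ is Brauer-equivalent to $A$ then $[B]=[A]$, so $A\cong\overline{A}^{opp}$ if and only if $B\cong\overline{B}^{opp}$, and the existence of an $\iota$-semilinear anti-automorphism transfers between $A$ and $B$ for free.

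The remaining and genuinely non-formal step is to upgrade an isomorphism $\phi$ to an honest involution, i.e. to arrange $\widetilde{\iota}^{2}=\mathrm{id}$. Given any $\iota$-semilinear anti-automorphism $\widetilde{\iota}$, its square $\widetilde{\iota}^{2}$ is an $E$-linear automorphism of $A$, hence inner by Skolem--Noether, say $\widetilde{\iota}^{2}=\mathrm{Int}(u)$; replacing $\widetilde{\iota}$ by $\mathrm{Int}(v)\circ\widetilde{\iota}$ changes the defect to $\mathrm{Int}\bigl(v\,\widetilde{\iota}(v)^{-1}u\bigr)$, so one must solve a twisted-norm equation $v\,\widetilde{\iota}(v)^{-1}u\in E^{\times}$. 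The solvability of this equation is exactly Albert's theorem on involutions of the second kind. I expect this symmetrization to be the main obstacle: it is not a purely Brauer-theoretic manipulation, and it is the only place where one uses more than equality of Brauer classes. Once it is in place, the chain ``$A$ has an involution of the second kind extending $\iota$ $\iff$ $[A]+\iota_{*}[A]=0$ $\iff$ $[B]+\iota_{*}[B]=0$ $\iff$ $B$ has one'' completes the proof. As a consistency check one can verify the easy forward implication directly: an involution $\tau$ on $A$ induces $(a_{ij})\mapsto(\tau(a_{ji}))$ on every matrix algebra $M_{n}(A)$, which already shows existence is stable under passing from $A$ to $M_{n}(A)$.
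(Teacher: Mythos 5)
Your overall strategy---recast the existence of an involution of the second kind as a condition depending only on the Brauer class---is exactly the route the paper takes: its proof is a one-line appeal to the Albert--Riehm--Scharlau criterion that such an involution exists if and only if $\mathrm{Cores}_{E/E^{\iota}}([A])=0$ in $\mathrm{Br}(E^{\iota})$, which manifestly depends only on $[A]$. The gap is that the condition you substitute for this, $[A]+\iota_{*}[A]=0$, is the wrong invariant. That condition is equivalent to the existence of a $\iota$-semilinear \emph{anti-automorphism} of $A$ (your dictionary $A\cong\overline{A}^{\,opp}$ is fine for that much), and it is implied by the corestriction condition because $\mathrm{Res}_{E/E^{\iota}}\circ\mathrm{Cores}_{E/E^{\iota}}=1+\iota_{*}$; but it is strictly weaker, since the restriction map $\mathrm{Br}(E^{\iota})\to\mathrm{Br}(E)$ has kernel the relative Brauer group of the quadratic extension, which is nonzero in general. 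Consequently the symmetrization step you flag as the main obstacle genuinely fails in general: Albert's theorem asserts that your twisted-norm equation is solvable precisely when the corestriction splits, not whenever an anti-automorphism exists, so the middle link of your chain of equivalences is false. (For involutions of the \emph{first} kind the analogous upgrade from anti-automorphism to involution does always work, which may be the source of the confusion.)

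A concrete counterexample: take $E=\mathbb{Q}(i)$, $E^{\iota}=\mathbb{Q}$, and let $A$ be the quaternion algebra over $E$ with local invariant $1/2$ at the two inert primes $3$ and $7$ and invariant $0$ elsewhere. Then ${}^{\iota}A\cong A\cong A^{opp}$, so $[A]+\iota_{*}[A]=0$ and $A$ carries a $\iota$-semilinear anti-automorphism; yet $\mathrm{Cores}_{E/\mathbb{Q}}([A])$ is the quaternion algebra over $\mathbb{Q}$ ramified at $3$ and $7$, which is nonsplit, so $A$ admits no involution of the second kind. The repair is simply to use the correct Brauer-class condition $\mathrm{Cores}_{E/E^{\iota}}([A])=0$ (as the paper does, citing the Book of Involutions), after which the lemma is immediate because the corestriction of a Brauer class depends only on the class. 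Your closing observation that an involution $\tau$ on $A$ induces $(a_{ij})\mapsto(\tau(a_{ji}))$ on $M_{n}(A)$ is correct, but it only gives the easy direction of the transfer.
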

\begin{proof}The existence of involutions on $A$ extending $\sigma$ can be characterized cohomologically by $\mathrm{Cores}_{E/E^\iota}([A])=0\in\mathrm{Br}(E^\iota)$,  see \cite[1.3]{BOI}.
\end{proof}
\subsection{The central simple algebras}
Let's choose suitable central simple algebras that represent the classes $\alpha_0\ \textrm{and}\ \sigma^*(\alpha_0)$. First, let's recall a useful lemma:

\begin{lemma}\label{skn} Let $V,V'$ be $k$-vector spaces of same dimension, then there is a natural isomorphism $\mathrm{Isom}(V,V')/k^*\cong \mathrm{Isom}(\mathrm{End}(V),\mathrm{End}(V'))$, given by $g\mapsto(f\mapsto g\circ f\circ g^{-1}).$
\end{lemma}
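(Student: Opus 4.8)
The plan is to verify that the stated map is well-defined and then establish it is a bijection, checking injectivity and surjectivity separately; the content is essentially a relative form of the Skolem--Noether theorem. First I would check that for each $g\in\mathrm{Isom}(V,V')$ the assignment $\phi_g\colon f\mapsto g\circ f\circ g^{-1}$ really is a ring isomorphism $\mathrm{End}(V)\to\mathrm{End}(V')$: the composite $g\circ f\circ g^{-1}$ maps $V'$ to $V'$, the assignment is additive, it sends $\mathrm{id}_V$ to $\mathrm{id}_{V'}$, it is multiplicative because the inner factors $g^{-1}\circ g$ cancel, and it admits the two-sided inverse $\phi_{g^{-1}}$. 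Next I would show the map descends to $\mathrm{Isom}(V,V')/k^*$ and is injective there. If $\phi_g=\phi_{g'}$, then $h:=g'^{-1}\circ g\in\mathrm{Aut}(V)$ satisfies $h\circ f=f\circ h$ for every $f\in\mathrm{End}(V)$, so $h$ lies in the centre of $\mathrm{End}(V)$, which is $k\cdot\mathrm{id}_V$; hence $g=\lambda g'$ for some $\lambda\in k^*$. The converse is immediate, since conjugation by $g$ and by $\lambda g$ coincide, so the induced map on the quotient is well-defined and injective.

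The heart of the matter, and the step I expect to be the main obstacle, is surjectivity: every $k$-algebra isomorphism $\psi\colon\mathrm{End}(V)\to\mathrm{End}(V')$ arises as some $\phi_g$. I would argue module-theoretically. Set $A:=\mathrm{End}(V)$, which is a simple algebra whose unique simple left module, up to isomorphism, is $V$ itself, and every $A$-module is a direct sum of copies of $V$. Pulling back the tautological $\mathrm{End}(V')$-action along $\psi$ equips $V'$ with an $A$-module structure via $f\cdot v'=\psi(f)(v')$. Since each copy of the simple module $V$ contributes $\dim_k V$ to the $k$-dimension and $\dim_k V'=\dim_k V$ by hypothesis, the $A$-module $V'$ must be a single copy of $V$, so there is an $A$-module isomorphism $g\colon V\to V'$. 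By definition such a $g$ is a $k$-linear isomorphism satisfying $g(f(v))=\psi(f)(g(v))$ for all $f$ and $v$, which rearranges to $\psi(f)=g\circ f\circ g^{-1}=\phi_g(f)$. This produces the required $g$ and completes surjectivity.

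Finally I would note that the isomorphism is natural in $V$ and $V'$: since it is given by the explicit conjugation formula, compatibility with further linear isomorphisms of $V$ and $V'$ is a direct check on composites, so no separate argument beyond tracking the formula is needed.
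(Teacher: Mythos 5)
Your proof is correct. Note, though, that the paper does not actually prove this lemma: it simply identifies it as the Skolem--Noether theorem (in the special case of endomorphism algebras of vector spaces) and cites Gille--Szamuely for the proof. What you have written out is essentially the standard argument behind that citation: well-definedness and injectivity via the fact that the centre of $\mathrm{End}_k(V)$ is $k\cdot\mathrm{id}_V$, and surjectivity by viewing $V'$ as a module over the simple algebra $A=\mathrm{End}(V)$ through $\psi$, using that $A$ has a unique simple module $V$ and comparing dimensions to get an intertwiner $g$. This is a perfectly good, self-contained substitute for the citation. Two small points worth making explicit: the argument uses that $V$ and $V'$ are nonzero and finite-dimensional (so that $\mathrm{End}(V)$ is simple Artinian and every module is a direct sum of copies of $V$), which holds in the paper's application since the vector spaces there are cohomology groups of coherent sheaves on a curve; and since $\dim V=\dim V'$, the set $\mathrm{Isom}(V,V')$ is nonempty, so the quotient by $k^*$ makes sense.
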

\begin{proof}This is the Skolem-Noether lemma, see \cite[2.7.2]{GS} for proof.
\end{proof}

Since $X_{k^{\mathrm{sep}}}$ has rational points, we know tautological line bundles exist on $X_{k^{\mathrm{sep}}}\times(\mathrm{Pic}^0_{X/k})_{k^{\mathrm{sep}}}.$
Let $\eta=\mathrm{Spec}(Kk^{\mathrm{sep}})$ be the generic point of $(\mathrm{Pic}^0_{X/k})_{k^{\mathrm{sep}}}$. Note that different tautological line bundles on $X_{k^{\mathrm{sep}}}\times(\mathrm{Pic}^0_{X/k})_{k^{\mathrm{sep}}}$ differ by pullback of line bundles on $(\mathrm{Pic}^0_{X/k})_{k^{\mathrm{sep}}}$, so all tautological line bundles have the unique (up to isomorphism) restriction on $X_{\eta}$. We denote the unique line bundle on $X_\eta$ by $L$.

Let's denote $Kk^{\mathrm{sep}}$ by $K'$, then\footnote{Take $F=K'\cap k^{\mathrm{sep}}$. If $F\neq k$, then the idempotents in $F\otimes_kF$ would give zero divisors in the function field of $\mathrm{Pic}^0_{X_F/F}$.} $K'\cap k^{\mathrm{sep}}=k$. By \cite[VI.1.12]{Lang}, we know $K'/K$ is a Galois extension with Galois group $G$. Thus we may view $\mathrm{H}^0(X_{K'},\omega_{X_{K'}}\otimes L)$ and $\mathrm{End}(\mathrm{H}^0(X_{K'},\omega_{X_{K'}}\otimes L))$ as $G$-module and $G$-algebra.

\begin{lemma}\label{alg1}
The $G$-algebra $\mathrm{End}(\mathrm{H}^0(X_{K'},\omega_{X_{K'}}\otimes L))$ descends to a central simple algebra over $K$ with Brauer class $\alpha_0$.
\end{lemma}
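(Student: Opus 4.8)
The plan is to equip the matrix algebra $\mathrm{End}(V)$, where $V:=\mathrm{H}^0(X_{K'},\omega_{X_{K'}}\otimes L)$, with a genuine semilinear $G$-action, descend it by Galois descent, and then match the resulting Brauer class with the spectral-sequence class $\alpha_0$ through crossed-product cohomology. Note $\dim_{K'}V=g-1$ by Riemann--Roch (the term $\mathrm{h}^1(\omega_{X_{K'}}\otimes L)=\mathrm{h}^0(L^{-1})$ vanishes at the generic point), which matches $\mathrm{ind}(\alpha_0)\mid g-1$ and presupposes $g\geq 2$.

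First I would build the action. Since $\omega_{X_{K'}}$ is the base change of $\omega_{X_K}$ and, as recorded above, $\sigma^*L\cong L$ for every $\sigma\in G$, each $\sigma$ yields a $\sigma$-semilinear automorphism of $\omega_{X_{K'}}\otimes L$ once an isomorphism $\phi_\sigma\colon\sigma^*L\xrightarrow{\sim}L$ is chosen; taking global sections gives a $\sigma$-semilinear automorphism $\psi_\sigma$ of $V$. The choice of $\phi_\sigma$ is unique only up to $K'^*$, so the $\psi_\sigma$ form merely a projective representation, $\psi_\sigma\psi_\tau=c(\sigma,\tau)\,\psi_{\sigma\tau}$ for a $2$-cocycle $c\in\mathrm{H}^2(G,K'^*)$. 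Passing to $\mathrm{End}(V)$ kills the ambiguity: by Lemma \ref{skn}, the conjugation map $\mathrm{Int}(\psi_\sigma)\colon a\mapsto\psi_\sigma a\psi_\sigma^{-1}$ is independent of the choice of $\phi_\sigma$, is $\sigma$-semilinear on $\mathrm{End}(V)$, and satisfies $\mathrm{Int}(\psi_\sigma)\circ\mathrm{Int}(\psi_\tau)=\mathrm{Int}(\psi_{\sigma\tau})$ since the central scalar $c(\sigma,\tau)$ cancels. Because $\alpha_0$ is split by a finite subextension of $k^{\mathrm{sep}}/k$, this action factors through a finite Galois quotient, so $\{\mathrm{Int}(\psi_\sigma)\}$ is a continuous descent datum.

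Next I would invoke Galois descent for finite-dimensional algebras: the datum $\{\mathrm{Int}(\psi_\sigma)\}$ produces a $K$-algebra $D_0$ with a $G$-equivariant isomorphism $D_0\otimes_K K'\cong\mathrm{End}(V)$, and since $\mathrm{End}(V)$ is central simple over $K'$, its descent $D_0$ is central simple over $K$. This settles the descent part of the statement, and I would then only need to pin down the Brauer class.

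The hard part will be identifying $[D_0]$ with $\alpha_0$, which I would do in two steps. On one side, the standard dictionary between Galois descent of matrix algebras and $\mathrm{H}^2(G,K'^*)\cong\mathrm{Br}(K'/K)$ (see \cite{GS}, \cite[1.3]{BOI}) identifies $[D_0]$ with the class of the cocycle obstructing a lift of $\{\mathrm{Int}(\psi_\sigma)\}$ to an honest semilinear action on $V$, which by construction is exactly the cocycle $c$ above. On the other side, $c$ is the obstruction to descending $L$ from $X_{K'}$ to $X_K$, valued in $\mathrm{H}^2(G,\Gamma(X_{K'},\mathcal{O}^*))=\mathrm{H}^2(G,K'^*)$; unwinding the edge map $\mathrm{d}_2^{0,1}$ of the Leray spectral sequence and restricting to the generic point $\mathrm{Spec}(K)$ of $\mathrm{Pic}^0_{X/k}$ shows this descent obstruction equals $\alpha_0$. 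Combining the two yields $[D_0]=\alpha_0$. I expect the only delicate points to be checking that the cocycle extracted from $\mathrm{d}_2^{0,1}$ agrees, up to the usual sign/inverse conventions, with the crossed-product cocycle of $D_0$, and that restriction to the generic point commutes with the formation of the edge map; the remainder of the argument is formal.
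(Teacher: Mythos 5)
Your argument is correct in outline, but it takes a genuinely different route from the paper. The paper's proof is essentially a two-line reduction to the literature: it cites Giraud [V.4.9] for the fact that $\mathbb{P}(\mathrm{H}^0(X_{K'},\omega_{X_{K'}}\otimes L)^\vee)$ descends to a Brauer--Severi variety over $K$ representing $\alpha_0$, and then uses Lemma \ref{skn} to identify $\mathrm{Aut}(\mathbb{P}(V^\vee))$ with $\mathrm{Aut}(\mathrm{End}(V))$, so that the same $\mathrm{PGL}_{g-1}(K')$-valued $1$-cocycle defines both objects and the Brauer classes agree. You instead work one level down, at the $2$-cocycle level: you build the projective semilinear action from choices $\phi_\sigma\colon\sigma^*L\to L$, descend $\mathrm{End}(V)$ via the well-defined conjugations $\mathrm{Int}(\psi_\sigma)$, and identify the resulting Brauer class with the lifting obstruction $c\in\mathrm{H}^2(G,K'^*)$, which you then match with the descent obstruction of $L$ and hence with the Leray edge map defining $\alpha_0$. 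What your approach buys is self-containedness --- you are in effect reproving the content of the Giraud citation by hand --- and an explicit crossed-product cocycle, which is more informative; what it costs is that the genuinely nontrivial step (that the class extracted from $\mathrm{d}_2^{0,1}$ restricted to $\mathrm{Spec}(K)$ is exactly the descent obstruction of $L$ along $K'/K$) is asserted as ``unwinding the edge map'' rather than carried out, and this is precisely the point the paper outsources to Giraud. Your sign/convention caveat and your dimension count $\dim_{K'}V=g-1$ are both correct and consistent with the paper's use of $\mathrm{PGL}_{g-1}$.
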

\begin{proof}
Note that $\mathbb{P}(\mathrm{H}^0(X_{K'},\omega_{X_{k'}}\otimes L)^\vee)$ descends to the a Brauer-Severi variety over $K$ that represents $\alpha_0$, see \cite[V.4.9]{giraud}. By Lemma \ref{skn}, we know $\mathrm{Aut}(\mathbb{P}(V^\vee))=\mathrm{Aut}(\mathrm{End}(V))$. Thus the Brauer-Severi variety represents the same $1$-cocycle as $\mathrm{End}(\mathrm{H}^0(X_{K'},\omega_{X_{K'}}\otimes L))$ in $\mathrm{H}^1(G,\mathrm{PGL}_{g-1}(K'))$.\end{proof}

\begin{lemma}\label{alg2}
The $G$-algebra $\mathrm{End}(\mathrm{H}^0(X_{K'},\omega_{X_{K'}}\otimes L^\vee))$ descends to a central simple algebra over $K$ with Brauer class $\sigma^*(\alpha_0)$.
\end{lemma}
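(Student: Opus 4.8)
The plan is to run the same argument as in Lemma \ref{alg1}, replacing $L$ by $L^\vee$ throughout, and to isolate the one genuinely new point: the identification of the resulting Brauer class as $\sigma^*(\alpha_0)$ rather than $\alpha_0$ itself. For the descent half of the statement, nothing changes. Since $\deg L = 0$ we have $\deg(\omega_{X_\eta}\otimes L^\vee)=2g-2$, so $\mathrm{H}^0(X_{K'},\omega_{X_{K'}}\otimes L^\vee)$ is again $(g-1)$-dimensional; by \cite[V.4.9]{giraud} the projective space $\mathbb{P}(\mathrm{H}^0(X_{K'},\omega_{X_{K'}}\otimes L^\vee)^\vee)$ descends to a Brauer--Severi variety over $K$, and by Lemma \ref{skn} the associated $1$-cocycle in $\mathrm{H}^1(G,\mathrm{PGL}_{g-1}(K'))$ is represented by the $G$-algebra $\mathrm{End}(\mathrm{H}^0(X_{K'},\omega_{X_{K'}}\otimes L^\vee))$. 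Hence this $G$-algebra descends to a central simple algebra over $K$; the whole content is to compute its class.

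The key geometric input is that the involution $\sigma$ dualizes the tautological bundle. Writing $\mathcal{L}$ for a tautological bundle on $X_{k^{\mathrm{sep}}}\times(\mathrm{Pic}^0_{X/k})_{k^{\mathrm{sep}}}$, the square formed by $\mathrm{id}_X\times\sigma$ over $\sigma$ is Cartesian, and $(\mathrm{id}_X\times\sigma)^*\mathcal{L}$ is a tautological bundle for the same family whose fibre over a point $[\mathcal{M}]$ is the bundle indexed by $\sigma([\mathcal{M}])=[\mathcal{M}^\vee]$; thus $(\mathrm{id}_X\times\sigma)^*\mathcal{L}\cong\mathcal{L}^\vee$ up to tensoring by a line bundle pulled back from the base. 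Restricting to the generic fibre gives $\sigma^*L\cong L^\vee$ on $X_\eta$ (note $\sigma^2=\mathrm{id}$ removes any ambiguity about the direction of pullback). Because the obstruction class produced by the Leray sequence for $\pi\colon X\times\mathrm{Pic}^0_{X/k}\to\mathrm{Pic}^0_{X/k}$ is functorial under the base change $\sigma$, the class $\sigma^*(\alpha_0)$ is exactly the obstruction attached to the family with tautological bundle $(\mathrm{id}_X\times\sigma)^*\mathcal{L}\cong\mathcal{L}^\vee$. Running the Giraud construction on $\omega\otimes L^\vee$ therefore yields a Brauer--Severi variety representing $\sigma^*(\alpha_0)$, and combining with the previous paragraph identifies the descended algebra's class as $\sigma^*(\alpha_0)$, as claimed.

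The main obstacle is the careful bookkeeping around the fact that $\mathcal{L}$ exists only after an \'etale base change and is only well-defined up to twists pulled back from $\mathrm{Pic}^0_{X/k}$. I expect to argue that such a twist changes $(\mathrm{id}_X\times\sigma)^*\mathcal{L}$ and $\mathcal{L}^\vee$ only by a factor in $\mathrm{pr}_2^*\mathrm{Pic}((\mathrm{Pic}^0_{X/k})_{k^{\mathrm{sep}}})$, which lies in the image of $\pi^*$ in the low-degree exact sequence of Section 2.2 and hence is annihilated by $\mathrm{d}_2^{0,1}$; so the twist is invisible in $\mathrm{Br}$ and does not affect the obstruction class. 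The supporting technical verification is the naturality of $\mathrm{d}_2^{0,1}$ with respect to the Cartesian square above, i.e.\ that pulling back $1_{\mathrm{Pic}^0_{X/k}}$ along $\sigma$ commutes with the differential; this is the one place where one must check compatibility of the two Leray spectral sequences, but it follows formally from functoriality of the spectral sequence under the base change $\sigma$.
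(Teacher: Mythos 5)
Your proof is correct, but it reaches the identification of the Brauer class by a genuinely different route than the paper. The paper's proof of Lemma~\ref{alg2} is purely algebraic: it starts from Lemma~\ref{alg1}, observes that $\sigma^*(\alpha_0)$ is therefore represented by the base change $\mathrm{End}(\mathrm{H}^0(X_{K'},\omega_{X_{K'}}\otimes L)\otimes_{K,\sigma}K)$, converts this by flat base change for cohomology into $\mathrm{End}(\mathrm{H}^0(X_{K'},\omega_{X_{K'}}\otimes\sigma^*L))$, and then produces an explicit $G$-equivariant isomorphism $\sigma^*L\cong L^\vee$ by choosing a $K$-rational section of the trivial bundle $\sigma^*L\otimes L$. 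You instead work upstream of the generic point: you identify $(\mathrm{id}_X\times\sigma)^*\mathcal{L}$ with $\mathcal{L}^\vee$ up to a twist from the base, use functoriality of the Leray differential $\mathrm{d}_2^{0,1}$ under the automorphism $\sigma$ to recognize $\sigma^*(\alpha_0)$ as the obstruction class of the dual family, and then apply the Giraud construction directly to $\omega\otimes L^\vee$. Both arguments rest on the same geometric fact (that $\sigma$ dualizes the tautological bundle), but your version trades the explicit Galois-equivariant isomorphism for a naturality check on the spectral sequence, and you correctly flag and dispose of the twist ambiguity, which the paper absorbs into the uniqueness of $L$ on $X_\eta$. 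One thing worth noting: the paper's explicit $G$-equivariant isomorphism $\sigma^*L\cong L^\vee$ is reused verbatim in Proposition~\ref{prop} to build the map $\Phi$, so while your class-level argument suffices for this lemma, the concrete isomorphism is still needed later; your approach does not produce it as a byproduct, whereas the paper's does.
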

\begin{proof}By the previous lemma, we know $\sigma^*(\alpha_0)$ is represented by $\mathrm{End}(\mathrm{H}^0(X_{K'},\omega_{X_{K'}}\otimes L)\otimes_{K,\sigma}K)$. By \cite[III.9.3]{H}, there exists canonical isomorphism $\mathrm{H}^0(X_{K'},\omega_{X_{K'}}\otimes{L})\otimes_{K,\sigma}K\cong\mathrm{H}^0(X_{K'},\omega_{X_{K'}}\otimes \sigma^*L)$. It suffices to identify $\sigma^*L$ with $L^\vee$ Galois equivariantly.

Note that $\sigma^*L\otimes L$ is the trivial line bundle on $X_{K'}=(X_K)_{K'}$. Take any $K$-section of $\sigma^*L\otimes L$ would yield a $G$-equivariant isomorphism $\sigma^*L\cong L^\vee$.
\end{proof}

\subsection{Characterization by pairing}
We show in our case, existence of involution can be implied by existence of certain pairing.
\begin{lemma}\label{ivo} Let $k$ be a field. Let $D,D'$ be isomorphic central simple $k$-algebras, split by some Galois extension $k'/k$. Let $G:=\mathrm{Gal}(k'/k)$. Let's fix isomorphisms $$\phi\colon D\otimes_k{k'}\cong \mathrm{End}(V),\ \ \phi'\colon D'\otimes_k{k'}\cong\mathrm{End}(V'),$$ where $V,V'$ are $k'$-vector spaces. Then there exist a natural isomorphism $$\mathrm{Isom}(D,D')\cong \mathrm{Isom}(V,V')^G/k^*,\ \ g\mapsto (f\mapsto \phi'^{-1}(g\circ \phi(f)\circ g^{-1})).$$
\end{lemma}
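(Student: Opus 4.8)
The plan is to run Galois descent together with the Skolem--Noether lemma (Lemma \ref{skn}), reducing the statement to semilinear linear algebra over $k'$. First I would make the $G$-action on $\mathrm{Isom}(V,V')$ explicit. Via $\phi,\phi'$ the descent data of $D$ and $D'$ transport to $\tau$-semilinear ring automorphisms $\rho_\tau$ of $\mathrm{End}(V)$ and $\rho'_\tau$ of $\mathrm{End}(V')$ whose fixed rings are $\phi(D)$ and $\phi'(D')$. Fixing any $\tau$-semilinear bijection of $V$ and applying Lemma \ref{skn} to the resulting $k'$-linear automorphism, I get $\tau$-semilinear bijections $u_\tau\colon V\to V$ and $u'_\tau\colon V'\to V'$, unique up to $k'^*$, with $\rho_\tau=c_{u_\tau}$ and $\rho'_\tau=c_{u'_\tau}$ (conjugation). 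I then set $\tau\cdot g:=u'_\tau\circ g\circ u_\tau^{-1}$ for $g\in\mathrm{Isom}(V,V')$; a short semilinearity check shows $\tau\cdot g$ is again $k'$-linear. A priori this is only a projective action, since the $u_\tau$ compose up to the $2$-cocycle representing $[D]\in\mathrm{Br}(k)$ (and the $u'_\tau$ up to that of $[D']$); but as $D\cong D'$ the two cocycles agree after rescaling the $u_\tau$ by scalars, so the discrepancies cancel and $\tau\cdot g$ is a genuine $G$-action. This is the step I expect to be the main obstacle: pinning down the action and certifying it is a true action rather than a projective one rests essentially on $[D]=[D']$.

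Next I would verify that the displayed map takes values in $\mathrm{Isom}(D,D')$. Since conjugation is multiplicative, $\Phi_g\colon f\mapsto\phi'^{-1}(g\,\phi(f)\,g^{-1})$ is at least a $k'$-algebra isomorphism $D\otimes_k k'\to D'\otimes_k k'$, bijective because $g$ is. The content is that $\Phi_g(D)\subseteq D'$, i.e.\ that $g\,\phi(f)\,g^{-1}$ is $\rho'_\tau$-fixed for all $f\in D$. Inserting $u_\tau^{-1}u_\tau$ and using $\rho_\tau(\phi(f))=\phi(f)$ yields the key identity
\[
\rho'_\tau\bigl(g\,\phi(f)\,g^{-1}\bigr)=(\tau\cdot g)\,\phi(f)\,(\tau\cdot g)^{-1}.
\]
As $\phi(D)$ spans $\mathrm{End}(V)$ over $k'$ and conjugations are $k'$-linear maps $\mathrm{End}(V)\to\mathrm{End}(V')$, these values lie in $D'$ for every $f$ exactly when $c_{\tau\cdot g}=c_g$, i.e.\ when $\tau\cdot g\in k'^*g$ for all $\tau$. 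Genuine $G$-invariance of $g$ guarantees this, so $\Phi_g\in\mathrm{Isom}(D,D')$.

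Finally I would establish bijectivity after passing to quotients. Replacing $g$ by $\mu g$ with $\mu\in k^*$ leaves $\Phi_g$ unchanged, so $\Phi$ factors through $\mathrm{Isom}(V,V')^G/k^*$; and if $\Phi_{g_1}=\Phi_{g_2}$ then Lemma \ref{skn} forces $g_1=\lambda g_2$ with $\lambda\in k'^*$, while $G$-invariance of both forces $\lambda\in k^*$, giving injectivity. For surjectivity, a given $\psi\in\mathrm{Isom}(D,D')$ extends to $\psi\otimes 1$ and transports to a $k'$-algebra isomorphism $\mathrm{End}(V)\to\mathrm{End}(V')$, which by Lemma \ref{skn} is $c_g$ for some $g$; then $\psi=\Phi_g$, and since $\psi$ preserves the $k$-forms, $g$ is projectively fixed, $\tau\cdot g=\lambda_\tau g$. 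Here $(\lambda_\tau)$ is a $1$-cocycle valued in $k'^*$, so by Hilbert's Theorem~90 I may rescale $g$ by an element of $k'^*$ to a genuinely $G$-fixed representative, unique modulo $k^*$. This reconciliation of projective fixed points (modulo $k'^*$) with honest fixed points (modulo $k^*$) is the second delicate point of the argument, and it is exactly where Hilbert~90 is needed.
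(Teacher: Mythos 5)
Your proposal is correct and takes essentially the same route as the paper: Skolem--Noether to identify $\mathrm{Isom}(V,V')/k^{\prime*}$ with $\mathrm{Isom}(\mathrm{End}(V),\mathrm{End}(V'))$, Galois descent to pass to $\mathrm{Isom}(D,D')$, and Hilbert 90 to reconcile $\mathrm{Isom}(V,V')^G/k^*$ with $(\mathrm{Isom}(V,V')/k^{\prime*})^G$. The paper compresses these three steps into two sentences; you have simply unwound them, including the (correct) observation that $[D]=[D']$ is what makes the semilinear action on $\mathrm{Isom}(V,V')$ genuine rather than projective.
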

\begin{proof}By Hilbert 90, we know $\mathrm{Isom}(V,V')^G/k^*=(\mathrm{Isom}(V,V')/k^{\prime*})^G$.
Then we conclude from Lemma \ref{skn} and Galois descent.
\end{proof}

\begin{proposition}\label{prop}
If there exists a $\sigma$-equivariant and $G$-equivariant bilinear mapping $$u\colon \mathrm{H}^0(X_{K'},\omega_{X_{K'}}\otimes L)\times \mathrm{H}^0(X_{K'},\omega_{X_{K'}}\otimes L^\vee)\to K',$$
then there exists a $G$-equivariant $\sigma$-linear isomorphism $$\mathrm{H}^0(X_{K'},\omega_{X_{K'}}\otimes L)\to\mathrm{H}^0(X_{K'},\omega_{X_{K'}}\otimes L)^\vee.$$
\end{proposition}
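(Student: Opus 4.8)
The plan is to produce the asserted semilinear isomorphism by adjunction, after first identifying $W' := \mathrm{H}^0(X_{K'},\omega_{X_{K'}}\otimes L^\vee)$ with the $\sigma$-twist of $W := \mathrm{H}^0(X_{K'},\omega_{X_{K'}}\otimes L)$. Here $\sigma$ acts on $K'$ as $\sigma$ on $K$ and trivially on $k^{\mathrm{sep}}$, hence commutes with $G$, and the induced $\sigma$-semilinear self-map of $X_{K'}$ carries $\omega_{X_{K'}}\otimes L$ to $\omega_{X_{K'}}\otimes\sigma^*L$. Using that $\omega_{X_{K'}}$ descends to $k$, so $\sigma^*\omega_{X_{K'}}\cong\omega_{X_{K'}}$, together with the $G$-equivariant isomorphism $\sigma^*L\cong L^\vee$ exhibited in the proof of Lemma \ref{alg2}, passing to global sections yields a distinguished $G$-equivariant, $\sigma$-semilinear isomorphism $\psi\colon W\to W'$, i.e. $\psi(\lambda w)=\sigma(\lambda)\psi(w)$ for $\lambda\in K'$ and $\psi(gw)=g\psi(w)$ for $g\in G$. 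This geometric identification is the only genuinely nonformal ingredient, and it is essentially already contained in Lemma \ref{alg2}.

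Granting $\psi$, I would define the candidate map $\Phi\colon W\to W^\vee=\mathrm{Hom}_{K'}(W,K')$ by $\Phi(w)(v):=u(v,\psi(w))$ and verify three points. First, $\Phi(w)$ lies in $W^\vee$ because $u$ is $K'$-linear in its first slot. Second, $\Phi$ is $\sigma$-semilinear: combining the $\sigma$-semilinearity of $\psi$ with linearity of $u$ in its second slot gives $\Phi(\lambda w)(v)=u(v,\sigma(\lambda)\psi(w))=\sigma(\lambda)\Phi(w)(v)$. Third, $\Phi$ is $G$-equivariant: letting $G$ act on $W^\vee$ by $(gf)(v)=g(f(g^{-1}v))$, the $G$-equivariance of both $\psi$ and $u$ yields $\Phi(gw)(v)=u(v,g\psi(w))=g\,(u(g^{-1}v,\psi(w)))=(g\,\Phi(w))(v)$. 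This is exactly the $G$-equivariant Hilbert-90 bookkeeping of Lemma \ref{ivo}, now run with the additional $\sigma$-semilinearity. The point to watch throughout is to keep the Galois semilinearity over $k^{\mathrm{sep}}$ and the $\sigma$-semilinearity over $K$ cleanly separated, so that they compose to the single asserted $\sigma$-semilinearity of $\Phi$.

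The remaining and most delicate step is to promote $\Phi$ from a homomorphism to an isomorphism, which is where I expect the real work to sit. Since $\dim_{K'}W=\dim_{K'}W^\vee=g-1$, it suffices to show $\Phi$ is injective, equivalently that the (nontrivial) pairing $u$ has trivial left radical $R=\{v\in W:u(v,-)=0\}$. The plan here is to exploit that $R$ is a $K'$-subspace which is $G$-stable: if $v\in R$ then $u(gv,w')=g\,u(v,g^{-1}w')=0$ for all $w'$. Now $\mathrm{End}_{K'}(W)$ descends to the division algebra $D$ representing $\alpha_0$ (division because $\mathrm{ind}(\alpha_0)=g-1$ equals $\deg D$), and the left ideal $I_R=\{a\in\mathrm{End}_{K'}(W):\operatorname{im}(a)\subseteq R\}$ is $G$-stable by conjugation, hence descends to a left ideal of $D$. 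As $D$ has no proper nonzero one-sided ideals, $I_R$ is $0$ or everything, forcing $R=0$ or $R=W$; since $u\neq 0$ we get $R=0$, so $\Phi$ is injective and therefore an isomorphism. (If one prefers to avoid the division-algebra input, one simply takes $u$ to be non-degenerate from the outset, which is the relevant case.) Thus the main obstacle is not any single computation but securing non-degeneracy, and this is what the ideal-theoretic argument above is designed to supply.
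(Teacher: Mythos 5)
Your construction is the same as the paper's: take the $G$-equivariant isomorphism $\sigma^*L\cong L^\vee$ from Lemma \ref{alg2}, pass to global sections to get a $G$-equivariant $\sigma$-semilinear map $\Phi$ (your $\psi$) into $\mathrm{H}^0(X_{K'},\omega_{X_{K'}}\otimes L^\vee)$, plug it into one slot of $u$ to define the map into the dual space, and check $G$-equivariance and $\sigma$-semilinearity exactly as the paper does. Where you diverge is in your third paragraph: the paper simply stops after the equivariance checks and never addresses why the resulting map is an isomorphism (the statement only hypothesizes a ``bilinear mapping,'' which literally read is too weak --- the intended hypothesis, supplied in the application via Lemma \ref{pairing}, is that $u$ is a perfect pairing). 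Your attempt to extract non-degeneracy internally, however, does not work in the paper's generality: Section \ref{sec3} concerns an arbitrary smooth proper curve over an arbitrary field, and the identity $\mathrm{ind}(\alpha_0)=g-1$ is established in the introduction only for the universal curve in characteristic zero. For a general $X/k$ (e.g.\ one with a rational point, where $\alpha_0=0$) the algebra $\mathrm{End}_{K'}(W)$ descends to a matrix algebra, which has plenty of proper nonzero left ideals, so the $G$-stability of the radical $R$ gives you nothing. Your parenthetical fallback --- assume $u$ non-degenerate from the outset --- is the correct fix and is exactly what the paper's surrounding argument provides; I would promote that remark to the main line and drop the division-algebra detour, or else state explicitly that it only applies when $\mathrm{ind}(\alpha_0)=\dim_{K'}W$.
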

\begin{proof} As in Proposition \ref{alg2}, we take a $G$-invariant isomorphism $\phi\colon L^\vee \to\sigma^*L$. This isomorphism yields a $G$-equivariant $\sigma$-linear isomorphism $$\Phi\colon \mathrm{H}^0(X_{K'},\omega_{X_{K'}}\otimes L)\to \mathrm{H}^0(X_{K'},\omega_{X_{K'}}\otimes L^\vee).$$
Consider the map
$$Q\colon \mathrm{H}^0(X_{K'},\omega_{X_{K'}}\otimes L)\to \mathrm{H}^0(X_{K'},\omega_{X_{K'}}\otimes L)^\vee,$$
$$s\mapsto \left(t\mapsto u(\Phi(t),s)\right).$$
We check $G$-equivariance: For any $g\in G$, it acts on $Q$ as $g\cdot Q\colon t\mapsto g(u(\Phi(g^{-1}t,s))).$
This coincides with $Q$ by the $G$-equivariance of $\Phi$ and $u$. The $\sigma$-linearity follows from the $\sigma$-equivariance of $u$ and $\sigma$-linearity of $\Phi$.
\end{proof}

\subsection{Existence of pairing}
We show the $G$-invariant perfect pairing in the previous section indeed exists.
\begin{lemma}\label{pairing}Let $k$ be an algebraically closed field. Let $V_1,V_2,U$ be $k$-vector spaces. Let $B\colon V_1\times V_2\to U$ be a bilinear pairing with no zero-divisors (i.e. $B(v_1,v_2)\neq 0$ if $v_1\neq 0$ and $v_2\neq0$). If $\mathrm{dim}_kV_1=\mathrm{dim}_kV_2$, then there exists a linear form $\sigma\colon U\to k$ such that $\sigma\circ B\colon V_1\times V_2\to k$ is a perfect pairing.
\end{lemma}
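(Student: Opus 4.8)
The plan is to realize the sought-after linear form as a point of the projective space $\mathbb{P}(U^{*})$ and to show that the forms $\sigma$ for which $\sigma\circ B$ fails to be perfect sweep out only a subvariety of strictly smaller dimension, so that a generic $\sigma$ does the job. First I would reduce to the case $\dim_k U<\infty$: replacing $U$ by the span $U_{0}\subseteq U$ of the image of $B$ changes nothing, since every linear form on $U_{0}$ extends to $U$ and perfectness of $\sigma\circ B$ depends only on $\sigma|_{U_{0}}$. Write $n=\dim_k V_{1}=\dim_k V_{2}$ and $m=\dim_k U$. The key consequence of the no-zero-divisor hypothesis is that for each nonzero $v_{1}\in V_{1}$ the map $B(v_{1},-)\colon V_{2}\to U$ is injective, so its image $B(v_{1},V_{2})$ has dimension exactly $n$; in particular $m\ge n$.

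Before the main count I would record the elementary reduction that, because $\dim V_{1}=\dim V_{2}$, a bilinear form $V_{1}\times V_{2}\to k$ is perfect as soon as its left kernel is trivial: vanishing of the left kernel makes the induced map $V_{1}\to V_{2}^{*}$ an injection between spaces of equal dimension, hence an isomorphism, and its transpose is then the other induced isomorphism $V_{2}\to V_{1}^{*}$. So it is enough to find $\sigma$ for which no nonzero $v_{1}$ satisfies $\sigma|_{B(v_{1},V_{2})}=0$.

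The heart of the argument is a dimension count on the incidence variety
$$I=\bigl\{([v_{1}],[\sigma])\in\mathbb{P}(V_{1})\times\mathbb{P}(U^{*})\ :\ \sigma|_{B(v_{1},V_{2})}=0\bigr\},$$
which is closed, being cut out by the bihomogeneous equations $\sigma\bigl(B(v_{1},f_{j})\bigr)=0$ for a basis $f_{1},\dots,f_{n}$ of $V_{2}$. Over each point of $\mathbb{P}(V_{1})$ the fiber of the first projection is $\mathbb{P}\bigl(B(v_{1},V_{2})^{\perp}\bigr)$, and by the injectivity above this annihilator has dimension $m-n$, so every fiber has dimension $m-n-1$. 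Hence $\dim I\le(n-1)+(m-n-1)=m-2$, and the image of the second projection $I\to\mathbb{P}(U^{*})$ is a constructible set of dimension at most $m-2<m-1=\dim\mathbb{P}(U^{*})$. As $\mathbb{P}(U^{*})$ is irreducible and $k$ is algebraically closed, its complement contains a $k$-point $[\sigma]$; by construction $\sigma\circ B$ has trivial left kernel, hence is perfect.

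I expect the only delicate point to be the fiber-dimension bookkeeping: the argument succeeds precisely because the no-zero-divisor hypothesis forces all fibers of the first projection to have the constant dimension $m-n-1$ (were $B(v_{1},-)$ to acquire a kernel for some $v_{1}$, the corresponding fiber would jump and the image could fill $\mathbb{P}(U^{*})$). One should also dispatch the boundary case $m=n$ separately, where $I$ is empty and any nonzero $\sigma$ already yields a perfect pairing.
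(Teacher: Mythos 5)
Your proposal is correct and follows essentially the same route as the paper: the same incidence variety inside $\mathbb{P}(V_1)\times\mathbb{P}(U^{\vee})$, the same fiber-dimension count $(n-1)+(m-n-1)=m-2$ using injectivity of $B(v_1,-)$, and the same conclusion that the bad locus cannot dominate $\mathbb{P}(U^{\vee})$. The extra reductions you record (finite-dimensionality of $U$, trivial left kernel plus equal dimensions implies perfect, the case $m=n$) are points the paper leaves implicit, and they are handled correctly.
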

\begin{proof}
Since the bilinear pairing $B$ has no zero-divisors, and $k$ is algebraically closed, the pairing $B$ induces a morphism $\beta\colon\mathbb{P}(V_1)\times\mathbb{P}(V_2)\to\mathbb{P}(U)$. Finding an linear form $\sigma$, up to scaling, is equivalent to pin down a hyperplane $H\subset\mathbb{P}(U)$. The pairing $\sigma\circ B$ is perfect if and only if $H$ does not contain $\beta(t\times \mathbb{P}(V_2))$ for any $t\in\mathbb{P}(V_1)$.

Let $Z=\{(H,t)|H\supset \beta(t\times\mathbb{P}(V_2))\}\subset\mathbb{P}^\vee(U)\times\mathbb{P}(V_1)$ be the incidence subvariety. Denote the two projections on $\mathbb{P}^\vee(U)\times\mathbb{P}(V_1)$ by $\mathrm{pr}_1$ and $\mathrm{pr}_2$. Let $\mathrm{dim}(U)=n,\mathrm{dim}(V_i)=m$. Since $\beta$ restricts to linear embedding in both family of fibers, we know $\mathrm{pr}_2$ is a smooth fibration in $\mathbb{P}^{n-1-m}$, so $\mathrm{dim}(Z)=(m-1)+(n-1-m)=n-2$.
Thus $\mathrm{dim}({\mathrm{pr}_1(Z)})\leq\mathrm{dim}(Z)<\mathrm{dim}(\mathbb{P}^\vee(U))$, hence $\mathbb{P}^\vee(U)\backslash\mathrm{pr_1}(Z)\neq\emptyset$. Any element in the non-empty set gives a hyperplane, whose induced pairing is perfect.
\end{proof}

\begin{theorem} The division algebra at the generic point of $\mathrm{Pic}^0_{X/k}$, which represents the Brauer class $\alpha_0$, has an involution of second kind extending the natural involution on $\mathrm{Pic}^0_{X/k}$.
\end{theorem}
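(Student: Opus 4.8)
The plan is to assemble the machinery of this section into a single chain. By the first lemma of Section \ref{sec3} (the cohomological criterion $\mathrm{Cores}_{K/K^\sigma}(\alpha_0)=0$, which may be checked on any representative of the class), it suffices to exhibit an involution of second kind extending $\sigma$ on \emph{some} central simple algebra in the class $\alpha_0$; by Lemma \ref{alg1} the descent of $\mathrm{End}(\mathrm{H}^0(X_{K'},\omega_{X_{K'}}\otimes L))$ is such an algebra, so I will build the involution there and then transport it to $D$. Concretely, a $G$-equivariant $\sigma$-linear isomorphism $\psi\colon\mathrm{H}^0(X_{K'},\omega_{X_{K'}}\otimes L)\to\mathrm{H}^0(X_{K'},\omega_{X_{K'}}\otimes L)^\vee$ induces the $\sigma$-semilinear anti-automorphism $f\mapsto\psi^{-1}\circ f^\vee\circ\psi$; by Lemma \ref{skn} and Lemma \ref{ivo} this is $G$-equivariant, so it descends to a $\sigma$-semilinear isomorphism $D\cong D^{opp}$ over $K$, which is the sought involution provided it squares to the identity.

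To obtain $\psi$ I will invoke Proposition \ref{prop}, which reduces the problem to producing a $\sigma$-equivariant and $G$-equivariant bilinear pairing $u$ on $\mathrm{H}^0(\omega_{X_{K'}}\otimes L)\times\mathrm{H}^0(\omega_{X_{K'}}\otimes L^\vee)$. The natural candidate is the multiplication of sections $u_0\colon(a,b)\mapsto a\cdot b\in\mathrm{H}^0(X_{K'},\omega_{X_{K'}}^{\otimes 2})$, which is manifestly $G$- and $\sigma$-equivariant and, because $X_{K'}$ is integral, has no zero-divisors. Composing with a linear form $\lambda\colon\mathrm{H}^0(\omega_{X_{K'}}^{\otimes 2})\to K'$ turns $u_0$ into the required $K'$-valued pairing, and Lemma \ref{pairing}, applied over the algebraic closure, guarantees that some $\lambda$ makes $\lambda\circ u_0$ perfect.

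The first obstacle is equivariance: Lemma \ref{pairing} is stated over an algebraically closed field and produces $\lambda$ with no control over the $G$- and $\sigma$-actions. Here I will use that the target $\mathrm{H}^0(\omega_{X_{K'}}^{\otimes 2})=\mathrm{H}^0(X_K,\omega_{X_K}^{\otimes 2})\otimes_K K'$, the pairing $u_0$, and the incidence variety $Z\subset\mathbb{P}^\vee(U)\times\mathbb{P}(V_1)$ of the proof of Lemma \ref{pairing} all descend to the common fixed field $F$ of the (commuting) actions of $G$ and $\sigma$. The good locus $\mathbb{P}^\vee(U)\setminus\mathrm{pr}_1(Z)$ is then a nonempty Zariski-open subset defined over $F$; since $F$ is infinite it contains an $F$-rational point, which furnishes a $\lambda$ that is simultaneously $G$-invariant (so that $\lambda\circ u_0$ descends to $K$) and $\sigma$-invariant. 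Feeding $u=\lambda\circ u_0$ into Proposition \ref{prop} yields the equivariant $\psi$ above.

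The second and more delicate point — the one I expect to be the main obstacle — is $\widetilde{\iota}^2=\mathrm{id}$. The square of $f\mapsto\psi^{-1}f^\vee\psi$ is the inner automorphism $\mathrm{Int}(\psi^{-1}\psi^\vee)$ under the canonical identification $V^{\vee\vee}=V$, so the involution condition is equivalent to the symmetry $\psi^\vee=\lambda'\psi$ for a scalar $\lambda'$, i.e. to the associated $\sigma$-sesquilinear form being Hermitian. I expect this to follow from the commutativity of the multiplication $u_0$ together with the $\sigma$-linearity of the identification $\phi\colon L^\vee\cong\sigma^*L$ used in Proposition \ref{prop}, which should interchange the two arguments of the induced form up to $\sigma$-conjugation; verifying this compatibility precisely (or, should symmetry fail on the nose, rescaling $\psi$ to enforce $\psi^\vee=\pm\psi$) is where the real work lies. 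Once it holds, $\widetilde{\iota}$ is a genuine involution of second kind on the matrix representative, and the first lemma of the section transfers its existence to the division algebra $D$, completing the proof.
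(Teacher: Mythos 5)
Your proposal follows the paper's proof essentially step for step: the multiplication-of-sections pairing into $\mathrm{H}^0(X_{K'},\omega_{X_{K'}}^{\otimes 2})$, cut down to a perfect $K'$-valued pairing by a linear form obtained from Lemma \ref{pairing} plus a rational point of the good open locus, then Proposition \ref{prop}, Lemma \ref{ivo}, and the corestriction criterion to pass to the division algebra. The two points you single out as delicate are exactly the points the paper passes over: it only asks for a $K$-rational linear form (which gives $G$-invariance but not visibly $\sigma$-invariance), and it concludes directly from Proposition \ref{prop} and Lemma \ref{ivo}, which by themselves produce only a $\sigma$-semilinear anti-automorphism $D\cong D^{\mathrm{opp}}$. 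For involutions of the second kind that last gap is genuine: an anti-automorphism extending $\sigma$ only yields $\sigma^*\alpha_0=-\alpha_0$, hence that $\mathrm{Cores}_{K/K^{\sigma}}(\alpha_0)$ dies under restriction back to $K$, which does not force it to vanish in $\mathrm{Br}(K^{\sigma})$ (already over local fields there are algebras anti-isomorphic to their conjugates that admit no involution of the second kind), so the corestriction criterion cannot be invoked to repair a non-involutive $\widetilde{\iota}$. Your plan — establish $\psi^\vee=c\,\psi$ from the commutativity of multiplication of sections together with an involutive normalization of $\phi\colon L^\vee\cong\sigma^*L$ (arrangeable by rescaling the trivializing section of $\sigma^*L\otimes L$ via Hilbert 90) and a linear form chosen over the common fixed field of $G$ and $\sigma$ — is therefore not just matching the paper but supplying verifications its proof leaves implicit; carrying them out is the only remaining work.
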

\begin{proof}
Consider the natural bilinear map induced by taking tensor product of sections: $$\mathrm{H}^0(X_{K'},\omega_{X_{K'}}\otimes L)\times \mathrm{H}^0(X_{K'},\omega_{X_{K'}}\otimes L^\vee)\to \mathrm{H}^0(X_{K'},\omega_{X_{K'}}^{\otimes2}).$$
Note that his map is $\sigma$-equivariant, because $\omega_{X_{K'}}=(\omega_X)_{K'}$ is defined over $k$, so $\sigma^*\omega_{X_{K'}}\cong\omega_{X_{K'}}$.
Geometrically, this pairing has no zero-divisors, since on an irreducible curve, product of nonzero rational functions is nonzero. By Lemma \ref{pairing}, geometrically, the perfect pairings form a non-empty Zariski open subset $O\subset \mathbb{P}^\vee(\mathrm{H}^0(X_K,\omega_{X_K}^{\otimes2}))$. Since $K$ is an infinite field, we can always find a $K$-rational point in $O$. This $K$ point yields a desired pairing. Then we conclude by Proposition \ref{prop} and Lemma \ref{ivo}.
\end{proof}
\begin{remark} Let $X/k$ be a smooth genus $g$ curve. Note that there also exists natural involution $\sigma'$ on $\mathrm{Pic}^{g-1}_{X/k}$, induced by $L\mapsto \omega_X\otimes L^\vee$. The same method shows that the class $\alpha_{g-1}$ has a natural involution of second kind extending $\sigma'$.
\end{remark}

\section{Specialization to $\Theta$}\label{sec4}
Let $k$ be a field, let $X$ be a proper smooth genus $g$ curve defined over $k$. We show the class $\alpha\in\mathrm{Br}(\mathrm{Pic}_{X/k})$ splits at the generic point of the theta divisor $\Theta\subset\mathrm{Pic}^{g-1}_{X/k}$.

\subsection{The theta divisor}\label{4.1}
The theta divisor $\Theta\subset\mathrm{Pic}_{X/k}^{g-1}$ is defined as the loci $W_{g-1}^0=\{[M]\in\mathrm{Pic}^{g-1}_{X/k}|h^0(X,M)>0\}$. Recall its scheme structure is defined by the first Fitting ideal of a resolution of $R^1\pi_*L$, where $L$ is a tautological bundle. We briefly recall the construction, see \cite[IV.3]{ACGH1} for details.

Let $U$ be an \'etale cover of $\mathrm{Pic}^{g-1}_{X/k}$, so that a tautological line bundle $L$ exist on $X\times_kU$. Let $\pi\colon X\times_kU\to U$ be the projection. Let's choose an effective canonical divisor $Z\in|\omega_X|$. Let $Z_U=Z\times_k U\subset X\times_k U$. They fit in the diagram:
$$\xymatrix{
&Z_U\ar@{^{(}->}[r] &X\times_k U\ar[rd]^{\pi}\ar[ld]& \\
Z\ar@{^{(}->}[r]&X&         &U.
}$$
Consider the short exact sequence on $X\times_kU$:
$$0\to L(-Z_U)\to L\to L|_{Z_U}\to 0.$$
Take direct image along $\pi_*$, we get exact sequence of sheaves on $U$:
$$0\to\pi_*L\to\pi_*(L|_{Z_U})\xrightarrow{\delta_U} R^1\pi_*(L(-Z_U))\to R^1\pi_*L\to 0.\eqno{(*)}$$
Let's denote $\pi_*{L|_{Z_U}}$ and $R^1\pi_*(L(-Z_U))$ by $F_U$ and $G_U$.
By cohomology and base change \cite[Cor 5.2]{mum}, we know $F_U,G_U$ are locally free of rank $2g-2$, the subsheaf $\pi_*L\subset F_U$ is torsion, so $\pi_*L=0$. Let $\delta_U\colon F_U\to G_U$ be the connecting homomorphism, then $\mathrm{det}(\delta_U)$ induces a nonzero section $s_U\colon\mathcal{O}_U\to\mathrm{det}(F_U)^\vee\otimes\mathrm{det}(G_U)$.
The line bundle $\mathrm{det}(F_U)^\vee\otimes\mathrm{det}(G_U)$ and section $s_U$ descend along the cover $U\to\mathrm{Pic}^{g-1}_{X/k}$. The vanishing locus $T_U$ of $s_U$ descends to a closed subscheme $T\subset\mathrm{Pic}^{g-1}_{X/k}$, this is the theta divisor $\Theta$.

Let $\pi'\colon X\times_k T_U\to T_U$ be the projection, let $L'=L|_{T_U}$, let $Z'=Z\times_kT_U\subset X\times_k T_U$. Similar to $(*)$, we have exact sequence\footnote{The sequence $(**)$ is not the base change of $(*)$ to the closed subscheme $T_U\subset U$, because $L$ is not cohomologically flat.}
$$0\to \pi'_{*}L'\to \pi'_{*}(L'|_{Z'})\overset{\delta'}{\to}R^1\pi'_{*}(L'(-Z'))\to R^1\pi'_{*}L'\to 0.\eqno{(**)}$$
Here is a classical result:

\begin{lemma}\label{rk1}
The coherent sheaf $\pi'_*L'$ is torsion free rank $1$.
\end{lemma}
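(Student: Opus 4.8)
The plan is to read off both assertions directly from the four-term exact sequence $(**)$: torsion-freeness will be essentially formal, while the rank is a classical Brill--Noether computation whose only real content is that the generic fibre carries a one-dimensional space of sections. First I would establish torsion-freeness. The sequence $(**)$ realizes $\pi'_*L'$ as $\ker\delta'$, hence as a subsheaf of $\pi'_*(L'|_{Z'})$. Since $Z\to\mathrm{Spec}\,k$ is finite flat of degree $2g-2$, its base change $Z'=Z\times_k T_U\to T_U$ is finite flat, and $L'|_{Z'}$ is invertible, so $\pi'_*(L'|_{Z'})$ is locally free (of rank $2g-2$) on $T_U$, in particular torsion free. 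As a subsheaf of a torsion-free sheaf is torsion free, $\pi'_*L'$ is torsion free. This step uses nothing beyond $(**)$ and finite flatness.

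For the rank I would pick a generic point $\eta$ of $T_U$. Because $T_U\to\Theta$ is \'etale and $\Theta=W_{g-1}$ is integral, $\eta$ lies over the generic point of $\Theta$. Localization at $\eta$ is flat, so flat base change along $\mathrm{Spec}\,\kappa(\eta)\to T_U$ identifies the stalk $(\pi'_*L')_\eta$ with $\mathrm{H}^0(X_{\kappa(\eta)},L'_\eta)$, a dimension unchanged by passing to $\overline{\kappa(\eta)}$. Geometrically $L'_\eta$ is the line bundle classified by the generic point of $W_{g-1}$, the image of the Abel--Jacobi map $u\colon X^{(g-1)}\to\mathrm{Pic}^{g-1}_{X/k}$. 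Since $u$ is birational onto $W_{g-1}$ (see \cite{ACGH1}), the generic fibre $|L'_\eta|$ is a single point, so $h^0=1$; as $h^0\geq 1$ everywhere on $\Theta$, the generic rank of $\pi'_*L'$ is exactly $1$.

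The delicate point is precisely this rank computation. Because $(**)$ is not the base change of $(*)$ --- the footnote records that $L$ fails to be cohomologically flat --- one cannot read the generic stalk of $\pi'_*L'$ off a restriction of $(*)$, and the identification with $\mathrm{H}^0$ of the generic fibre must be made directly by flat base change as above. As a consistency check one can instead take ranks in $(**)$: the alternating sum vanishes and the two middle terms both have generic rank $2g-2$, so $\mathrm{rk}\,\pi'_*L'=\mathrm{rk}\,R^1\pi'_*L'$; relative duality presents $R^1\pi'_*L'$ as dual to $\pi'_*(\mathrm{pr}_1^*\omega_X\otimes L'^\vee)$, whose generic rank is $h^0(\omega_X\otimes M^\vee)=h^1(M)=h^0(M)=1$ by Serre duality and Riemann--Roch in degree $g-1$. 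Either way, the genuine input underlying the rank-$1$ claim is the classical fact that a generic effective divisor class of degree $g-1$ has a one-dimensional space of sections.
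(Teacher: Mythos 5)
Your torsion-freeness argument is the same as the paper's (subsheaf of the locally free $\pi'_*(L'|_{Z'})$). For the rank, however, you take a genuinely different route. The paper never looks at the geometry of $W^0_{g-1}$: it argues locally that if the connecting map $\delta'$ had corank $\geq 2$ at a point $t$, then $\det(\delta')$ would lie in $\mathfrak{m}_t^2$, so $t$ would be a non-regular point of $T_U$; since $T_U$ is reduced it is regular at its generic point, whence corank exactly $1$ there and $\ker(\delta')=\pi'_*L'$ has generic rank $1$. You instead pass to the generic fibre by flat base change and invoke the birationality of the Abel--Jacobi map $X^{(g-1)}\to W_{g-1}$ to get $h^0=1$ generically. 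Both arguments are correct and both secretly rest on the same unproved input, namely that $T_U$ is (generically) reduced: the paper states this outright, while in your write-up it is hidden in the step ``$\mathrm{Spec}\,\kappa(\eta)\to T_U$ is flat,'' which holds at a generic point only when $\mathcal{O}_{T_U,\eta}=\kappa(\eta)$; you should flag that dependence, especially since the footnote about cohomological non-flatness is exactly a warning that fibrewise $h^0$ and the pushforward need not match over a non-reduced base. The trade-off: your route makes the source of the ``$1$'' transparent (a general degree-$(g-1)$ effective class moves in a zero-dimensional linear system), but it imports Brill--Noether geometry specific to $W_{g-1}$; the paper's determinantal argument is more portable and is reused verbatim in Section~\ref{sec5} for the generalized theta divisor $\Theta_E$, where no Abel--Jacobi parametrization is available and only reducedness of $\Theta_E$ is needed. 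Your relative-duality ``consistency check'' is, as you note, circular as a proof, since it reduces the rank to $h^0(M)$ again.
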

\begin{proof}The sheaf is torsion free since it is a subsheaf of the locally free sheaf $\pi'_*(L'|_{Z'})$. We check the connecting homomorphism $\delta'$ in $(**)$ has corank $1$: Given a point $t\in T_U$, if the corank of $\delta'|_t$ is at least $2$, then $\det(\delta'_t)\subset \mathfrak{m}_t^2$, thus $t$ lies in non-regular locus of $T_U$. But $T_U$ is reduced, so it is regular at the generic point.
\end{proof}

\subsection{Restriction to $k(\Theta)$}
Let $k(\Theta)$ be the function field of $\Theta$. We show a tautological line bundle exist on $X_{k(\Theta)}$, so $\alpha_{g-1}|_{k(\Theta)}=0$.
We start with a slight generalization of Brauer-Severi varieties.

\begin{lemma}{\label{descend}} Let $X$ be a scheme. Let $h\colon U\to X$ be an \'etale cover of $X$. Let $U^k$ be the $k$-th fiber product of $U$ over $X$. Let $U^2\overset{p_i}{\to} U$, $U^3\overset{p_{ij}}{\to} U^2$ and $U^3\overset{q_i}{\to} U$ be projection maps. Let $F$ be a coherent sheaf on $U$. Assume there exists a line bundle $N\in\mathrm{Pic}(U^2)$, such that there exists isomorphism $\phi\colon p_1^*F\overset{\sim}{\to} p_2^*F\otimes N$ and $\beta\colon p_{12}^*N\otimes p_{23}^*N\overset{\sim}{\to} p_{13}^*N,$ satisfying $(1_{q_3^*F}\otimes\beta)\circ (p_{23}^*\phi\otimes 1_{p_{12}^*N})\circ (p_{12}^*\phi)=p_{13}^*\phi,$ then the scheme $a_{F}\colon P':=\mathbb{P}(F)=\mathrm{Proj}(\mathrm{Sym}^\bullet(F^\vee))\to U$ descends to a scheme $a\colon P\to X.$
\end{lemma}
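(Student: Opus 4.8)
The plan is to descend the \emph{scheme} $\mathbb{P}(F)$ directly, rather than the sheaf $F$, which the hypotheses only pin down up to a twist by $N$. The mechanism is that projectivization is insensitive to twisting by a line bundle: for a line bundle $M$ and a coherent sheaf $E$ on a scheme $Y$ there is a canonical $Y$-isomorphism $c^M_E\colon \mathbb{P}(E\otimes M)\xrightarrow{\sim}\mathbb{P}(E)$, coming from the identity of graded algebras $\mathrm{Sym}^\bullet((E\otimes M)^\vee)=\mathrm{Sym}^\bullet(E^\vee)\otimes(M^\vee)^{\otimes\bullet}$, which have the same $\mathrm{Proj}$. I will also use that projectivization commutes with the flat base changes $p_i,p_{ij},q_i$, so that $p_i^*\mathbb{P}(F)=\mathbb{P}(p_i^*F)$ on $U^2$ and similarly on $U^3$, together with the fact that flat pullback commutes with forming $\mathrm{Sym}^\bullet$ and $(-)^\vee$ of a coherent sheaf.

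\textbf{Step 1 (descent datum on the scheme).} Composing $\phi\colon p_1^*F\xrightarrow{\sim}p_2^*F\otimes N$ with $c^N_{p_2^*F}$ gives a $U^2$-isomorphism
$$\psi\colon\ p_1^*P'=\mathbb{P}(p_1^*F)\ \xrightarrow{\sim}\ \mathbb{P}(p_2^*F\otimes N)\ \xrightarrow{\sim}\ \mathbb{P}(p_2^*F)=p_2^*P',$$
which is the candidate descent datum for $a_F\colon P'\to U$ relative to the \'etale cover $h\colon U\to X$.

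\textbf{Step 2 (the cocycle condition).} On $U^3$ I must verify $p_{13}^*\psi=p_{23}^*\psi\circ p_{12}^*\psi$. Both composites are assembled from pullbacks of $\phi$ and from the canonical twisting isomorphisms $c$; the two ways of composing the $N$-factors across $U^3$ differ exactly by the coherence isomorphism $\beta\colon p_{12}^*N\otimes p_{23}^*N\xrightarrow{\sim}p_{13}^*N$. The stated hypothesis $(1_{q_3^*F}\otimes\beta)\circ(p_{23}^*\phi\otimes1_{p_{12}^*N})\circ(p_{12}^*\phi)=p_{13}^*\phi$ is precisely the identity that, after applying $\mathbb{P}(-)$ and cancelling twists through the isomorphisms $c$, forces the two composites to agree. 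Hence $\psi$ is a genuine descent datum.

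\textbf{Step 3 (effectivity — the main obstacle).} Descent data for schemes along an \'etale cover need not be effective, and the usual remedy of descending a relatively ample sheaf is unavailable here: the tautological $\mathcal{O}_{\mathbb{P}(F)}(1)$ is carried by $\psi$ to itself twisted by $a_F^*N$, and this failure is exactly the pair $(N,\beta)$ we are studying. The way around is that $\mathrm{Proj}$ only sees the degree-zero parts of localizations of $\mathcal{A}:=\mathrm{Sym}^\bullet(F^\vee)$. Writing a chart as $\mathrm{Spec}\,\mathcal{A}_{(f)}$ with sections of the form $a/f^{\,n}$, $a\in\mathcal{A}_{nd}$, $f\in\mathcal{A}_d$, the twist by $N$ multiplies numerator and denominator by the same invertible factor $(N^\vee)^{\otimes nd}$, which cancels. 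Thus the affine charts of the relative $\mathrm{Proj}$ inherit \emph{honest} (untwisted) descent data from $\psi$; affine descent is effective (\cite[6.1]{BLR}), and these descended charts glue to the desired $a\colon P\to X$ with $P\times_XU\cong P'$. The delicate point I expect to be the crux is organizing this gluing compatibly with $\psi$ and $\beta$: since effectivity is local on $X$, one reduces to $X$ affine and checks that the chart descent data and their overlaps, all living in degree zero, satisfy the cocycle condition inherited from $\beta$. (In the Brauer--Severi case, where $F$ is locally free and $a_F$ is a $\mathbb{P}^{r-1}$-bundle, one may instead descend the relative anticanonical sheaf $\omega_{P'/U}^\vee$, which is intrinsic to $a_F$ and hence automatically $\psi$-invariant and relatively ample, and conclude directly from \cite[6.1]{BLR}.)
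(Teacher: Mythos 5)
Your Steps 1 and 2 are exactly the paper's argument: the canonical identification $\mathbb{P}(E\otimes M)\cong\mathbb{P}(E)$ turns $\phi$ into a covering datum $\mathbb{P}(\phi)$, and the displayed hypothesis involving $\beta$ into the cocycle condition for that datum. The divergence is in Step 3, and there your main argument has a genuine gap. The claim that ``the affine charts of the relative $\mathrm{Proj}$ inherit honest (untwisted) descent data from $\psi$'' does not hold as stated: a chart $D_+(f)\subset\mathbb{P}(F)$ is cut out by a \emph{chosen} section $f$ of $\mathrm{Sym}^d(F^\vee)$ over an open of $U$, whereas the descent datum $\psi$ lives over $U\times_XU$ and identifies fibres of $P'\to U$ over \emph{different} points of $U$; it carries $p_1^*D_+(f)$ to the non-vanishing locus of some section of $p_2^*\mathrm{Sym}^d(F^\vee)$ twisted by a power of $N$, which has no reason to be the pullback of one of your chosen charts. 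So the charts do not individually carry descent data, and ``affine descent is effective'' (which concerns schemes affine over the base $U$ of the descent, not an affine open cover of $P'$) cannot be invoked chartwise; arranging the $f$'s to be descent-compatible is the original problem, so the reduction is circular. The degree-zero cancellation of the twist is precisely what produces the isomorphism $c^M_E$ of Step 1; it contributes nothing further to effectivity.

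The paper disposes of effectivity by citing \cite[6.1.7]{BLR}, whose relevant clause requires an invertible sheaf on $P'$, ample relative to $U$, to which the descent datum lifts \emph{without} twist. Your parenthetical about descending $\omega_{P'/U}^{\vee}$ --- equivalently $\mathcal{O}_{P'}(r)\otimes a_F^*\det(F)^{\vee}$, using $\det(\phi)$ to identify $N^{\otimes r}$ with $p_1^*\det F\otimes p_2^*\det F^{\vee}$ --- is the right mechanism and is presumably what the citation amounts to, but you offer it only when $F$ is locally free, whereas the lemma is stated for coherent $F$ and is applied in the paper to $F=\pi'_*L'$, which is only torsion free of rank one. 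Thus the one sound route you indicate does not cover the case actually needed. To repair Step 3 you should either exhibit, in the required generality, a relatively ample invertible sheaf on $P'$ carrying an honest lift of $\psi$ (so that \cite[6.1.7]{BLR} applies), or simply invoke that theorem as the paper does rather than attempting the chartwise reduction.
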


\begin{proof}Recall in general, given a scheme $S$, a line bundle $L$ on $S$ and a coherent sheaf $F$ on $S$, there exists a unique isomorphism $f\colon A:=\mathbb{P}(F)\overset{\sim}{\to} B:=\mathbb{P}(F\otimes L)$ such that $f^*\mathcal{O}_B(1)\cong\mathcal{O}_A(1)\otimes L$. In our case, the isomorphisms $\phi$ provide the covering datum $\mathbb{P}(\phi)$, the isomorphism $\beta$ provides descent datum for descending $P'$ along $U\to X$. Effectiveness of the descent datum follows from \cite[6.1.7]{BLR}.
\end{proof}

\begin{lemma}\label{Brauerann}
Keep the notation as in the previous lemma. Let $f\colon Y\to X$ be a morphism of schemes. Let $q_{ij}$ be the projection maps from $U\times_XU\times_XY$ to its factors. Let $G$ be a coherent sheaf on $U\times_XY$, such that there exists isomorphism $\psi\colon q_{13}^*G\overset{\sim}{\to} q_{23}^*G\otimes q_{12}^*N,$ satisfying cocycle condition
compatible with $\beta$.
Let $r\colon Y\times_XP'\to P'$ be the projection and let $s\colon Y\times_XP'\to Y\times_XU$ be the base change of $a_{F}$.
Then $s^*G\otimes r^*\mathcal{O}_{P'}(-1)$ descends to a coherent sheaf on $Y\times_XP$.
\end{lemma}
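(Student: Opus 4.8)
The plan is to produce explicit étale descent data for $s^*G\otimes r^*\mathcal{O}_{P'}(-1)$ along the cover $Y\times_X P'\to Y\times_X P$ and then invoke effectivity of descent for quasi-coherent sheaves. First I would note that this is indeed an étale cover: since $P'$ descends to $P$ in Lemma~\ref{descend}, we have a canonical identification $P'\cong P\times_X U$, so $P'\to P$ is the base change of the étale cover $U\to X$, and this is preserved after base change along $Y\times_X P\to P$.

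Next I would identify the double fiber product. Using $P'\cong P\times_X U$ one computes $(Y\times_X P')\times_{Y\times_X P}(Y\times_X P')\cong Y\times_X P\times_X U^2$, with the two projections to $Y\times_X P'\cong Y\times_X P\times_X U$ forgetting respectively the second and the first copy of $U$. There is a natural map $w\colon Y\times_X P\times_X U^2\to U^2\times_X Y$ retaining only the two $U$-coordinates and the $Y$-coordinate, and one checks that $w$ carries the two pullbacks of $s^*G$ to $w^*q_{13}^*G$ and $w^*q_{23}^*G$. Hence $w^*\psi$ supplies an isomorphism $\mathrm{pr}_1^*s^*G\cong\mathrm{pr}_2^*s^*G\otimes w^*q_{12}^*N$ between the two pullbacks of $s^*G$, up to a twist by the pullback of $N$ along the $(u_1,u_2)$-coordinate.

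The crucial step is the observation that this $N$-twist is cancelled by the corresponding twist on the two pullbacks of $r^*\mathcal{O}_{P'}(-1)$. Indeed, the covering datum $\mathbb{P}(\phi)$ used to descend $P'$ in Lemma~\ref{descend} identifies the two pullbacks of $\mathbb{P}(F)$ while relating the tautological bundles $\mathcal{O}_{P'}(1)$ by a tensor factor of $N$, this being exactly the normalization $f^*\mathcal{O}(1)\cong\mathcal{O}(1)\otimes L$ recalled in that proof. Consequently the two pullbacks of $r^*\mathcal{O}_{P'}(-1)$ differ by the twist inverse to the one occurring for $s^*G$, and tensoring the two isomorphisms together makes the $N$-factors cancel, yielding a genuine isomorphism
$$\mathrm{pr}_1^*\!\left(s^*G\otimes r^*\mathcal{O}_{P'}(-1)\right)\;\cong\;\mathrm{pr}_2^*\!\left(s^*G\otimes r^*\mathcal{O}_{P'}(-1)\right).$$
This is the covering datum we need.

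Finally I would verify the cocycle condition on the triple fiber product $Y\times_X P\times_X U^3$. This reduces to combining the assumed compatibility of $\psi$ with $\beta$ and the cocycle satisfied by $\mathbb{P}(\phi)$ from Lemma~\ref{descend}; the two contributions of $\beta$ enter with opposite signs because of the cancellation above, so they match. Granting this, effectivity of fpqc (hence étale) descent for quasi-coherent sheaves, as in \cite[6.1]{BLR}, produces the desired coherent sheaf on $Y\times_X P$. I expect the main obstacle to be purely bookkeeping: tracking which projection carries which power of $N$ and confirming that the $\beta$-contributions cancel rather than reinforce. This is precisely where the hypothesis that $\psi$ be compatible with $\beta$ is indispensable, and where an incorrect sign would silently break the descent.
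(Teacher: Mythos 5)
Your proposal is correct and takes essentially the same route as the paper's proof: both identify the double fiber product of $Y\times_XP'$ over $Y\times_XP$ with $Y\times_XP\times_XU^2$, pull back $\psi$ and the twist relation $p_1^*\mathcal{O}_{P'}(1)\cong p_2^*\mathcal{O}_{P'}(1)\otimes t^*N$ induced by $\phi$, and cancel the $N$ against $N^\vee$ to obtain the covering datum, with the cocycle condition inherited from the $\beta$-compatibility of $\psi$. The only cosmetic difference is notational (the paper works with $P'\times_PP'\times_XY$ and explicit projections $q'_{ij}$, $t$ rather than your coordinate description), so nothing of substance differs.
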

\begin{proof}
Consider the cartesian diagram:
$$\xymatrix{
&Y\times_XP'\ar[rr]^{r}\ar[ld]\ar[dd]^<<<<<s&&P'\ar[dd]\ar[ld]\\
Y\times_XP\ar[dd]\ar[rr]&&P\ar[dd]&\\
&Y\times_XU\ar[ld]\ar[rr]&&U\ar[ld]\\
Y\ar[rr]&&X&
}$$

Let $p_i'\colon P'\times_PP'\to P'$ be the projections. Let $q'_{i3}\colon P'\times_PP'\times_XY\to P'\times_XY$, $q_{12}'\colon P'\times_PP'\times_XY\to P'\times_PP'$ be the projections. Let $t\colon P'\times_PP'\to U\times_XU$ be the product of structure morphisms.

By assumption on $F$, we know $\phi$ induces an isomorphism
$t^*\phi\colon p^{\prime*}_1\mathcal{O}_{P'}(1)\overset{\sim}{\to} p^{\prime*}_2\mathcal{O}_{P'}(1)\otimes t^*N.$ Apply $q^{\prime*}_{12}$ to both sides (note that $r\circ q'_{i3}=p'\circ q'_{12}$), we get an isomorphism $q^{\prime*}_{13}(r^*\mathcal{O}_{P'}(1))\overset{\sim}{\to}q^{\prime'}_{23}(r^*\mathcal{O}_{P'}(1))\otimes q^{\prime*}_{12}t^*N.$
By assumption on $G$, we have isomorphism $t^*\psi\colon q^{\prime*}_{13}s^*G\overset{\sim}{\to}q^{\prime*}_{23}s^*G\otimes q^{\prime*}_{12}t^*N.$ Thus we have the following canonical isomorphism serving as covering datum
\begin{align*}
T_\psi:\hspace{3mm}&q^{\prime*}_{13}(s^*G\otimes r^*\mathcal{O}_{P'}(-1)) \\
\overset{\sim}{\to}\ &q^{\prime*}_{23}s^*G\otimes q^{\prime*}_{12}t^*N\otimes q^{\prime*}_{12}t^*N^{\vee}\otimes q^{\prime*}_{23}(r^*\mathcal{O}_{P'}(-1))=\ q^{\prime*}_{23}(s^*G\otimes r^*\mathcal{O}_{P'}(-1)).
\end{align*}
The cocycle condition for $\psi$ implies the cocycle condition for $T_{\psi}$, hence $s^*\mathcal{G}\otimes r^*\mathcal{O}_{P'}(-1)$ descends to $P\times_XY$.
\end{proof}

We come back to the situation in section \ref{4.1}. Recall we denoted the projection $X\times_kT_U\to T_U$ by $\pi'$, and denoted the restriction of $L$ to $X\times_k T_U$ by $L'$. Let $\widetilde{T}_U=\mathbb{P}(\pi'_*L')$.
By Lemma \ref{descend}, the scheme $\widetilde{T}_U\to T_U$ descends to a scheme $\widetilde{T}\to T$, along the covering $T_U\to T$. We have diagram
$$\xymatrix{
&X\times\widetilde{T}_U\ar[rr]^{r}\ar[ld]\ar[dd]^<<<<<s&&\widetilde{T}_U\ar[dd]\ar[ld]\\
X\times\widetilde{T}\ar[dd]\ar[rr]&&\widetilde{T}\ar[dd]&\\
&X\times T_U\ar[ld]\ar[rr]&&T_U\ar[ld]\\
X\times T\ar[rr]&&T&
}$$

\begin{theorem}
The Brauer class $\alpha_{g-1}\in\mathrm{Br}(\mathrm{Pic}^{g-1}_{X/k})$ restricts to zero in $\mathrm{Br}(k(\Theta))$.
\end{theorem}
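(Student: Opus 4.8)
The plan is to produce a genuine tautological line bundle on $X\times_k\widetilde{T}$ and then to exploit that $\widetilde{T}\to\Theta$ is birational. Recall that $\alpha_{g-1}$ was defined as the obstruction (via the Leray spectral sequence for $\pi$) to a tautological line bundle; consequently, for any scheme $S$ equipped with a morphism to $\mathrm{Pic}^{g-1}_{X/k}$, the existence of a tautological line bundle on $X\times_kS$ forces the pullback of $\alpha_{g-1}$ to $S$ to vanish. Hence it suffices to exhibit such a bundle on $X\times_k\widetilde{T}$ together with the identity $k(\widetilde{T})=k(\Theta)$.

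First I would invoke Lemma \ref{Brauerann} with base $T=\Theta$, \'etale cover $T_U\to T$, and sheaf $F=\pi'_*L'$, so that $P'=\widetilde{T}_U$ descends to $P=\widetilde{T}$ as in Lemma \ref{descend}; then take $f\colon Y:=X\times_kT\to T$ the projection and $G:=L'$, the tautological bundle on $U\times_XY=X\times_kT_U$. The hypotheses require a covering datum $\psi$ for $L'$ compatible with the datum $\beta$ that descends $\widetilde{T}_U$. This is where the main work lies, and I expect it to be the principal obstacle: on $X\times_k(T_U\times_TT_U)$ the two pullbacks $p_1^*L'$ and $p_2^*L'$ classify the same point of $\mathrm{Pic}^{g-1}_{X/k}$, hence differ by the pullback of a line bundle $N$ on $T_U\times_TT_U$; applying $\pi'_*$ together with cohomology-and-base-change and the projection formula to the relation $p_1^*L'\cong p_2^*L'\otimes N$ identifies this $N$ with the very line bundle governing the descent of $\pi'_*L'=F$. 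This matching of cocycles is the crux of the argument.

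Granting the compatibility, Lemma \ref{Brauerann} yields a descent of $s^*L'\otimes r^*\mathcal{O}_{\widetilde{T}_U}(-1)$ to a coherent sheaf $\widetilde{L}$ on $X\times_k\widetilde{T}$, in the notation of the cartesian diagram preceding the statement. Since $L'$ is a line bundle and $\mathcal{O}_{\widetilde{T}_U}(-1)$ is invertible, the descended sheaf $\widetilde{L}$ is itself a line bundle. I would then verify that $\widetilde{L}$ is tautological: restricting to a fibre $X\times\{\widetilde{t}\}$, the factor $r^*\mathcal{O}_{\widetilde{T}_U}(-1)$ is trivial, being pulled back from the point $\widetilde{t}$, so $\widetilde{L}$ restricts to $L'|_{X\times\{t\}}$, the line bundle classified by the image $t\in\Theta$. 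Thus $\widetilde{L}$ is a Poincar\'e bundle for the morphism $\widetilde{T}\to\Theta\hookrightarrow\mathrm{Pic}^{g-1}_{X/k}$, and the pullback of $\alpha_{g-1}$ to $\widetilde{T}$ vanishes.

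Finally, because $\pi'_*L'$ is torsion-free of rank one by Lemma \ref{rk1}, the map $\widetilde{T}=\mathbb{P}(\pi'_*L')\to\Theta$ is an isomorphism over the locus where $\pi'_*L'$ is locally free, in particular over the generic point; hence it is birational and $k(\widetilde{T})=k(\Theta)$. Restricting the vanishing of $\alpha_{g-1}$ on $\widetilde{T}$ to this common generic point gives $\alpha_{g-1}|_{k(\Theta)}=0$, as desired. Apart from the descent-datum compatibility flagged above, the remaining steps are bookkeeping with the two stacked cartesian squares of the diagram.
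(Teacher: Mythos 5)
Your proposal is correct and follows essentially the same route as the paper: apply Lemma \ref{descend} and Lemma \ref{Brauerann} with $F=\pi'_*L'$ and $G=L'$ to descend the tautological bundle $s^*L'\otimes r^*\mathcal{O}_{\widetilde{T}_U}(-1)$ to $X\times_k\widetilde{T}$, then use that $\pi'_*L'$ has generic rank $1$ (Lemma \ref{rk1}) so that $\widetilde{T}\to\Theta$ is birational. The cocycle-compatibility step you flag as the crux is indeed the point the paper leaves implicit, and your sketch of how to handle it (pushing the relation $p_1^*L'\cong p_2^*L'\otimes N$ through $\pi'_*$ via the projection formula) is the right one.
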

\begin{proof} First we show the Brauer class restrict to zero in $\mathrm{Br}(\widetilde{T})$. It suffices to show there exists a tautological line bundle on $X\times_k\widetilde{T}$. We apply Lemma \ref{descend} and Lemma \ref{Brauerann}: Let $h:T_U\to T$ be the covering map, let $F=\pi'_*(L')$ and $G=L'$, then the line bundle $s^*L'\otimes r^*\mathcal{O}_{\widetilde{T}_U}(-1)$ is a tautological line bundle over $X\times_k\widetilde{T}_U$ which descends to $X\times_k\widetilde{T}$. Thus $\alpha_{g-1}$ maps to zero in $\mathrm{Br}(\widetilde{T})$. Then note that $\pi'_{*}(L')$ has generic rank $1$, the projectivization $\widetilde{T}\to T$ is a birational map, so the Brauer class restricts to $0$ in $\mathrm{Br}(k(\Theta))$.
\end{proof}

\begin{remark}
It is not clear if the Brauer class $\alpha_{g-1}$ restricts to zero in $\mathrm{Br}(\Theta)$.
\end{remark}

\section{Specialization to $\Theta_E$}\label{sec5}
Let $X/k$ the universal genus $g$ curve. Let $E$ be a semi-stable vector bundle of rank $2$ and slope $g-1$ on $X$. We show the Brauer class $\alpha_0$ splits at the generic point of generalized theta divisors $\Theta_E\subset\mathrm{Pic}^0_{X/k}$. The proof will be the same as in the previous section, as long as we know $\Theta_E$ is reduced\footnote{This is in general not true, if we work with arbitrary curves, or generalized theta divisor for higher rank vector bundles, see \cite{Hitching}.}.

\subsection{The universal curve}
Let $k_0$ be a fixed field. Let $g\geq3$ be an integer. Let $\mathcal{M}_{g}$ be the moduli stack of families of smooth genus $g$ curves over $k_0$. Let $k$ be the function field of $\mathcal{M}_{g}$. Let $\mathcal{X}\to\mathcal{M}_g$ be the universal family, we call its generic fiber $X/k$ the universal genus $g$ curve (for families of curves over $k_0$).
We collect some facts:
\begin{lemma}{\cite[5.1]{Ste}}\label{schoeer}The group  $\mathrm{Pic}(X)$ is generated by $\omega_X$.\end{lemma}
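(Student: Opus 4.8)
The plan is to compute $\mathrm{Pic}(X)$ by comparing it with the Picard group of the moduli stack. Recall that $X = X_\eta$ is the generic fibre of the universal curve $\pi\colon\mathcal{X}\to\mathcal{M}_g$, and there is a canonical identification $\mathcal{X}\cong\mathcal{M}_{g,1}$ under which $\pi$ becomes the point-forgetting morphism. Since $\mathcal{M}_g$ is a smooth Deligne--Mumford stack and for $g\geq 3$ the generic curve has no nontrivial automorphisms, $X$ is an honest smooth proper curve over $k$, and restriction defines a homomorphism $\mathrm{Pic}(\mathcal{X})\to\mathrm{Pic}(X)$.

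First I would establish the localization sequence
$$\mathrm{Pic}(\mathcal{M}_g)\xrightarrow{\pi^*}\mathrm{Pic}(\mathcal{M}_{g,1})\to\mathrm{Pic}(X)\to 0.$$
Surjectivity on the right holds because $\mathcal{X}$ is regular: any line bundle on the dense open $X\subset\mathcal{X}$ extends to a divisor class on all of $\mathcal{X}$. The kernel of restriction consists of classes supported on vertical divisors $\pi^{-1}(D)$, for $D\subset\mathcal{M}_g$ a prime divisor; since $\pi$ is smooth with geometrically irreducible fibres one has $\pi^{-1}(D)=\pi^*D$, and as $\mathcal{M}_g$ is regular these vertical classes are exactly $\pi^*\mathrm{Pic}(\mathcal{M}_g)$. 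Finally $\pi^*$ is injective by the projection formula, using $\pi_*\mathcal{O}_{\mathcal{X}}=\mathcal{O}_{\mathcal{M}_g}$. Hence $\mathrm{Pic}(X)\cong\mathrm{Pic}(\mathcal{M}_{g,1})/\pi^*\mathrm{Pic}(\mathcal{M}_g)$.

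Second, I would feed in the integral Picard group computations for $g\geq 3$, namely $\mathrm{Pic}(\mathcal{M}_g)=\mathbb{Z}\lambda$ and $\mathrm{Pic}(\mathcal{M}_{g,1})=\mathbb{Z}\lambda\oplus\mathbb{Z}\psi$ (Harer; Arbarello--Cornalba over $\mathbb{C}$). Since $\pi^*\lambda$ is the Hodge class on $\mathcal{M}_{g,1}$, the quotient is the free rank-one group generated by the image $\bar\psi$ of the cotangent class $\psi$. It then remains to identify $\bar\psi$ with $\omega_X$: the class $\psi=\psi_1$ assigns to a marked curve $(C,p)$ the cotangent line $T_p^*C$, so its restriction to a fibre $C$ of $\pi$ is the cotangent bundle $\Omega^1_C=\omega_C$; thus $\psi|_X=\omega_X$, and $\mathrm{Pic}(X)=\mathbb{Z}\omega_X$.

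The main obstacle is the integral Picard group computation, which in arbitrary characteristic -- the setting here, over an arbitrary $k_0$ -- is precisely the content underlying \cite{Ste}: the topological input of Harer is unavailable, and one must instead argue via the monodromy of the universal family. The rational computation (Moriwaki) shows that $\mathrm{Pic}(X)\otimes\mathbb{Q}$ has rank one, spanned by $\omega$; to upgrade this to the integral statement one must rule out (i) nontrivial torsion line bundles and (ii) nontrivial roots of $\omega$. A $k$-rational torsion line bundle spreads to a monodromy-invariant torsion point of the relative Jacobian, and since the geometric monodromy is the full symplectic group $\mathrm{Sp}_{2g}$, which has no nonzero invariant torsion vectors, none exist. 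Similarly a root $L$ with $mL=\omega$ for $m\geq 2$ -- a theta characteristic when $m=2$ -- would give a section of a nontrivial finite cover of $\mathcal{M}_g$, such as the spin cover $\mathcal{S}_g\to\mathcal{M}_g$, on whose fibres the mapping class group acts without fixed points for $g\geq 3$; no such section exists. Making this monodromy step work, together with the purity and wild-ramification issues that arise in positive characteristic, is the delicate part.
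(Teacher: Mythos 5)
The paper does not prove this lemma at all: it is quoted verbatim from \cite[5.1]{Ste} (Schr\"oer's proof of the strong Franchetta conjecture in arbitrary characteristic), so there is no internal proof to compare against. Your proposal is therefore an attempt to reprove the cited result. The characteristic-zero skeleton you give is the standard one and is essentially correct: $\mathcal{X}\cong\mathcal{M}_{g,1}$, the localization sequence $\mathrm{Pic}(\mathcal{M}_g)\to\mathrm{Pic}(\mathcal{M}_{g,1})\to\mathrm{Pic}(X)\to 0$ (valid because $\mathcal{M}_{g,1}$ is a smooth DM stack with irreducible fibres over $\mathcal{M}_g$, so vertical prime divisors are pullbacks), the Harer/Arbarello--Cornalba computation $\mathrm{Pic}(\mathcal{M}_{g,1})=\mathbb{Z}\lambda\oplus\mathbb{Z}\psi$, and the identification $\psi|_X=\omega_X$.

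The genuine gap is exactly where you flag it, and it is not a small one: the paper's setting is an arbitrary base field $k_0$, and in positive characteristic the integral computation of $\mathrm{Pic}(\mathcal{M}_{g,1})$ is not available as an input --- indeed, modulo the localization sequence it is essentially \emph{equivalent} to the statement being proved, so invoking it there is circular. Your fallback monodromy sketch (no $\mathrm{Sp}_{2g}$-invariant torsion, no section of the spin cover) is the right heuristic but is incomplete as stated: the symplectic monodromy on $\ell$-adic Tate modules only rules out prime-to-$p$ torsion in characteristic $p$, so $p$-torsion points of the generic Jacobian need a separate argument; and ruling out $m$-th roots of $\omega_X$ must be done for every $m\mid 2g-2$, not just the theta-characteristic case $m=2$. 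It is worth noting that Schr\"oer's actual route around these difficulties is different from the one you propose: rather than redoing the monodromy computation in characteristic $p$, he reduces to the characteristic-zero case by lifting and a specialization argument on Picard groups. As written, your proposal establishes the lemma only over $k_0$ of characteristic zero and leaves the case the citation is really needed for unproved.
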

\begin{lemma}\label{k} For line bundles on $\mathrm{Pic}^d_{X/k}$, the numerical class is always a multiple of $\frac{2g-2}{\mathrm{g.c.d}(2g-2,d+g-1)}\Theta$.\end{lemma}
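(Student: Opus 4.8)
The plan is to show that the group of N\'eron--Severi classes realized by line bundles defined over $k$ on the torsor $\mathrm{Pic}^d_{X/k}$ is exactly $\frac{2g-2}{\gcd(2g-2,d+g-1)}\mathbb{Z}\cdot\Theta$ inside the geometric N\'eron--Severi group. First I would record the geometric input: for the universal curve the Jacobian $A=\mathrm{Pic}^0_{X/k}$ has $\mathrm{End}(A_{\bar k})=\mathbb{Z}$, so $\mathrm{NS}(A_{\bar k})=\mathbb{Z}\cdot\Theta$ is of rank one, generated by the principal polarization. Since translations do not change numerical classes, the same holds on every component $\mathrm{Pic}^d_{X/k,\bar k}$, so the content of the lemma is purely the question of \emph{which} multiples $n\Theta$ descend to a line bundle over $k$ on the $A$-torsor $\mathrm{Pic}^d_{X/k}$.

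Next I would set up the descent obstruction. Taking Galois invariants in the exact sequence $0\to A^\vee(\bar k)\to \mathrm{Pic}(\mathrm{Pic}^d_{X/k,\bar k})\to \mathbb{Z}\Theta\to 0$ produces a connecting map $\partial\colon \mathbb{Z}\Theta\to \mathrm{H}^1(k,A^\vee)$, and $n\Theta$ is the class of a $k$-line bundle only if $\partial(n\Theta)=0$. The key computation is the value of $\partial$. Writing $\tau=[\mathrm{Pic}^1_{X/k}]\in \mathrm{H}^1(k,A)$, the torsors satisfy $[\mathrm{Pic}^d_{X/k}]=d\,\tau$, and $\partial$ obeys the transformation rule $\partial^{\mathrm{Pic}^d}(\lambda)-\partial^{\mathrm{Pic}^{d'}}(\lambda)=\lambda_*([\mathrm{Pic}^d]-[\mathrm{Pic}^{d'}])$. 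I would anchor the computation on $\mathrm{Pic}^{g-1}_{X/k}$, where the theta divisor $\Theta=W_{g-1}$ is an honest $k$-rational Cartier divisor, so that $\partial^{\mathrm{Pic}^{g-1}}(n\Theta)=0$ for all $n$; transporting gives $\partial^{\mathrm{Pic}^d}(n\Theta)=n(d-g+1)\,\phi_{\Theta*}(\tau)$, where $\phi_\Theta\colon A\xrightarrow{\sim}A^\vee$ is the principal, hence invertible, polarization. As $\phi_\Theta$ is an isomorphism, $\phi_{\Theta*}(\tau)$ has the same order as $\tau$, and $\partial^{\mathrm{Pic}^d}(n\Theta)=0$ is equivalent to $\mathrm{ord}(\tau)\mid n(d-g+1)$.

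The remaining input, and the crux of the argument, is that the period of the universal curve, i.e.\ the order of $\tau=[\mathrm{Pic}^1_{X/k}]$, equals $2g-2$. The divisibility $\mathrm{ord}(\tau)\mid 2g-2$ is immediate from Lemma \ref{schoeer}: the class $\omega_X$ is a $k$-point of $\mathrm{Pic}^{2g-2}_{X/k}$, so $(2g-2)\tau=0$. The hard part is the reverse, that $\mathrm{Pic}^d_{X/k}$ has no $k$-point for $d$ not divisible by $2g-2$. Here I would argue that for the universal curve the period equals the index: by Lemma \ref{schoeer} every line bundle, hence every closed point, has degree divisible by $2g-2$, so the index is $2g-2$; and a $k$-point of $\mathrm{Pic}^d_{X/k}$ of lower degree would produce, via $0\to\mathrm{Pic}(X)\to\mathrm{Pic}_{X/k}(k)\to\mathrm{Br}(k)$, a nonzero class in $\mathrm{Br}(k)=\mathrm{Br}(k(\mathcal{M}_g))$ that is split by the generic curve $X$. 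I expect this to be the main obstacle: one must rule out such a class, i.e.\ show that $\mathrm{Br}(k(\mathcal{M}_g))\to\mathrm{Br}(k(\mathcal{X}))$ is injective, equivalently that the generic curve splits no nontrivial Brauer class of its field of moduli. I would approach this by a specialization argument, or by leveraging that the generic Jacobian carries no $k$-rational theta characteristic, the $d=0$ case of the statement being precisely that $\Theta$ itself does not descend to $\mathrm{Pic}^0_{X/k}$ while $2\Theta$ does.

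Finally, granting $\mathrm{ord}(\tau)=2g-2$, the condition $\partial^{\mathrm{Pic}^d}(n\Theta)=0$ reads $(2g-2)\mid n(d-g+1)$, that is $\frac{2g-2}{\gcd(2g-2,d-g+1)}\mid n$. Since $d+g-1\equiv d-g+1 \pmod{2g-2}$ the two greatest common divisors coincide, and therefore every numerical class realized by a line bundle on $\mathrm{Pic}^d_{X/k}$ is a multiple of $\frac{2g-2}{\gcd(2g-2,d+g-1)}\Theta$, as claimed.
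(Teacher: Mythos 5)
Your argument follows the same route as the paper: identify $\mathrm{NS}$ of the geometric fibre with $\mathbb{Z}\Theta$, read off the obstruction to descending $n\Theta$ from the connecting homomorphism of $0\to\mathrm{Pic}^0\to\mathrm{Pic}\to\mathrm{NS}\to0$, observe that it is $n$ times the class of the torsor $\mathrm{Pic}^{d+g-1}_{X/k}$ (your normalization $d-g+1$ agrees with the paper's $d+g-1$ modulo $2g-2$, as you note), and conclude once the order of $\tau=[\mathrm{Pic}^1_{X/k}]$ is known to be exactly $2g-2$. The one step you leave open --- ruling out a $k$-point of $\mathrm{Pic}^d_{X/k}$ for $(2g-2)\nmid d$, i.e.\ that period equals $2g-2$ --- is precisely the strong Franchetta theorem in Schr\"oer's form, which concerns the rational points $\mathrm{Pic}_{X/k}(k)$ rather than merely the line bundles of Lemma \ref{schoeer}; the paper disposes of it by citing \cite{Ste}, which is exactly the injectivity of $\mathrm{Br}(k(\mathcal{M}_g))\to\mathrm{Br}(k(\mathcal{X}))$ type statement you identify as the crux.
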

\begin{proof} This is proved in \cite[1]{kou} when $\mathrm{char}(k_0)=0$. In general, let $P=\mathrm{Pic}^d_{X/k}$, we use the long exact sequence associated to $0\to\mathrm{Pic}^0(P_{k^{\mathrm{sep}}})\to\mathrm{Pic}(P_{k^{\mathrm{sep}}})\to\mathrm{NS}(P_{k^{\mathrm{sep}}})\to 0$.
By \cite[2]{MaxWei}, we know $\mathrm{NS}(P_{k^{\mathrm{sep}}})=\langle\Theta\rangle$. The connecting homomorphism maps $\Theta$ to the torsor $[\mathrm{Pic}^{g-1+d}_{X/k}]\in\mathrm{H^1}(k,\mathrm{Pic}^0_{X/k})$\footnote{One check this explicitly, using autoduality of $\mathrm{Pic}^0_{X/k}$.}. Then note that $[\mathrm{Pic}^1_{X/k}]$ has order $2g-2$, by the strong Franchetta theorem, see \cite{Ste}.
\end{proof}

\subsection{Semi-stable vector bundles on universal curves}
\begin{proposition}There exist semi-stable vector bundles of rank $2$ and slope $g-1$ on $X$.
\end{proposition}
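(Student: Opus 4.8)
The plan is to realize $E$ as a nonsplit extension of line bundles, using Lemma \ref{schoeer} to pin down its subbundles. I would take
\[
0 \to \mathcal{O}_X \to E \to \omega_X \to 0,
\]
whose isomorphism classes are parameterized by $\mathrm{Ext}^1(\omega_X,\mathcal{O}_X) = \mathrm{H}^1(X,\omega_X^{-1})$. By Serre duality this group is $\mathrm{H}^0(X,\omega_X^{\otimes 2})^\vee$, of dimension $3g-3 > 0$ for $g \geq 3$, so a nonzero class $e$ exists and I set $E = E_e$. Any such $E$ has rank $2$ and degree $\deg\mathcal{O}_X + \deg\omega_X = 2g-2$, hence slope $\mu(E) = g-1$ as required; it remains only to establish semistability.

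First I would verify semistability over $k$ via Lemma \ref{schoeer}. If $M \subset E$ is a saturated line subbundle, then $M$ is a line bundle on $X$, so $M \cong \omega_X^{\otimes n}$ and $\deg M = n(2g-2)$. For $n \leq 0$ we have $\deg M \leq 0 \leq g-1$ and $M$ is harmless. For $n \geq 1$, consider the composite $M \hookrightarrow E \to \omega_X$: if it vanishes then $M \subseteq \mathcal{O}_X$ forces $\deg M \leq 0$, contradicting $n \geq 1$; if it is nonzero it is a nonzero section of $\omega_X^{\otimes(1-n)}$, impossible for $n \geq 2$ (negative degree) and for $n=1$ a nowhere-vanishing section, i.e. an isomorphism $M \xrightarrow{\sim} \omega_X$ splitting the sequence, contradicting $e \neq 0$. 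Thus every line subbundle has $\deg M \leq 0 < g-1$, and $E$ is even stable over $k$.

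The main obstacle is upgrading this to \emph{geometric} semistability, since Lemma \ref{schoeer} fails over $k^{\mathrm{sep}}$, where $\mathrm{Pic}$ is large and new sub-line-bundles appear; here one must choose $e$ \emph{generically}. Over $\overline{k}$ a destabilizing $M$ with $\deg M \geq g$ mapping nontrivially to $\omega_X$ has the form $M \cong \omega_X(-D)$ for an effective $D$ with $\delta := \deg D \leq g-2$, and such $M$ lifts to a subbundle of $E_e$ exactly when the image of $e$ in $\mathrm{H}^1(\omega_X^{-1}(D))$ vanishes. Dualizing, this happens iff the functional $e \in \mathrm{H}^0(\omega_X^{\otimes 2})^\vee$ annihilates the codimension-$\delta$ subspace $\mathrm{H}^0(\omega_X^{\otimes 2}(-D)) \subset \mathrm{H}^0(\omega_X^{\otimes 2})$, where the dimension count uses $\deg(\omega_X^{\otimes 2}(-D)) = 4g-4-\delta > 2g-2$. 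Running over the family $\mathrm{Sym}^\delta X$ of dimension $\delta$, the bad classes in $\mathbb{P}(\mathrm{H}^0(\omega_X^{\otimes 2})^\vee) = \mathbb{P}^{3g-4}$ have dimension at most $\delta + (\delta-1) = 2\delta - 1 \leq 2g-5 < 3g-4$; since $k$ is infinite, a $k$-rational $e$ avoiding this proper closed locus exists, and $E_e$ is geometrically stable. The delicate point to write out in full is the lifting criterion together with the Serre-duality identification of the restriction map $\mathrm{H}^1(\omega_X^{-1}) \to \mathrm{H}^1(\omega_X^{-1}(D))$ with the transpose of the inclusion $\mathrm{H}^0(\omega_X^{\otimes 2}(-D)) \hookrightarrow \mathrm{H}^0(\omega_X^{\otimes 2})$.
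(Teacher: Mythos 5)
Your proof is correct, but it takes a genuinely different route from the paper. The paper builds $E$ as the kernel of an evaluation map $(\rho,u\rho)\colon\omega_X\oplus\omega_X\to\mathcal{O}_D$ for $D\in|\omega_X|$, reduces semistability (via Lemma \ref{schoeer}) to the inequality $\dim_k\mathrm{Im}(\Gamma(\rho,u\rho))\geq g+1$, and then verifies this by an explicit $(2g-2)\times 2g$ matrix-rank computation after degenerating to $D=\sum P_i$ with distinct points; the genericity in $u$ is handled by semicontinuity of rank. You instead take $E$ to be a non-split extension of $\omega_X$ by $\mathcal{O}_X$, which immediately gives the right rank and slope, and you use Lemma \ref{schoeer} in essentially the same way to classify line subbundles, killing the only dangerous case $M\cong\omega_X$ by non-splitness. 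Your construction is shorter, avoids the matrix computation entirely, and yields the stronger conclusion that $E$ is stable over $k$. One remark: your extra paragraph on geometric semistability is correct (the lifting criterion via Serre duality and the dimension count over $\mathrm{Sym}^\delta X$ both check out, with the bad locus of dimension $\leq 2\delta-1\leq 2g-5<3g-4$), but it is not actually needed: semistability is insensitive to base field extension because the Harder--Narasimhan filtration is canonical and descends, so semistability checked on subsheaves defined over $k$ already implies geometric semistability. Note that the paper itself tacitly relies on this same fact, since its case analysis also only considers subsheaves defined over $k$ where $\mathrm{Pic}(X)=\mathbb{Z}\cdot\omega_X$ applies. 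If you want to keep the claim of geometric \emph{stability}, the extra argument is the right one to make, but for the proposition as stated you may drop it.
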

\begin{proof}Let $D\in |\omega_X|$ be an effective canonical divisor, then $D$ is a prime divisor, because $\mathrm{Pic}(X)=\mathbb{Z}\cdot\omega_X$. Hence $D$ is a finite integral scheme over $\mathrm{Spec}(k)$, and $\Gamma(D,\mathcal{O}_D)/k$ is a degree $2g-2$ field extension.

Let $\rho\colon \Gamma(X,\omega_X)\to \Gamma(D,\omega_X|_D)=\Gamma(D,\mathcal{O}_D)$ be the restriction map.
Pick $u\in \Gamma(D,\mathcal{O}_D)\backslash\{0\}$, let $u\rho$ be the multiplication of $\rho$ by $u$. Let $(\rho,u\rho):\omega_X\oplus\omega_X\to \mathcal{O}_D$ be the sum of $\rho$ and $u\rho$. Let $E_u=\mathrm{Ker}(\rho,u\rho)$, then $E_u$ is a locally free rank $2$ sheaf on $X$. It has degree $2g-2$ and slope $g-1$.
We calculate the slope of subsheaves and find suitable $u$ such that $E_u$ is semi-stable.

Let $L$ be a subsheaf of $E_u$. As $X$ is a smooth curve and $E_u$ is locally free, so is $L$.
\begin{enumerate}[leftmargin=1cm]
\item If $\mathrm{rank}(L)=2$, then $\deg(L)=\deg(E_u)-\deg(E_u/L)\leq\deg(E_u)$, so $\mu(L)\leq\mu(E_u)$.
\item If $\mathrm{rank}(L)=1$, since $\mathrm{Pic}(X)=\mathbb{Z}\cdot\omega_X$, we may write $L=\omega_X^{\otimes k}$.
    If $k\leq0$, then $\mu(L)\leq 0\leq\mu(E_u)$.
    If $k\geq 1$, we may pick a nonzero section $t$ of $\omega_X^{\otimes(k-1)}$. The section gives embedding $\omega_X\overset{t}{\hookrightarrow}\omega_X^{\otimes k}\hookrightarrow E_u$, so $h^0(E_u)\geq h^0(\omega_X)=g$. Note that $E_u=\mathrm{Ker}(\rho,u\rho)$, so $h^0(E_u)=2h^0(\omega_X)- \dim_k(\mathrm{Im}(\Gamma(\rho,u\rho)))$, where $\Gamma$ is the functor of taking global section on $X$.
        Thus, in order that $E_u$ is semistable, it suffices to pick $u$ such that $\dim_k(\mathrm{Im}(\Gamma(\rho,u\rho)))\geq g+1$.
\end{enumerate}

Consider the family of coherent sheaves $E_u$ parameterized by $u$, over the vector space $\Gamma(D,\mathcal{O}_D)$. Since rank is a lower semi-continuous function, the condition $\dim_k(\mathrm{Im}(\Gamma(\rho,u\rho)))\geq g+1$ is open for $u\in \Gamma(D,\mathcal{O}_D)-\{0\}\cong k^{2g-2}-\{0\}$. As $k$ is infinite, the $k$-points in $k^{2g-2}$ are Zariski dense. Thus showing the existence of such $u$ is equivalent to showing the open set $\{u\in \mathbb{A}^{2g-2}_k-\{0\}|\dim_k(\mathrm{Im}(\Gamma(\rho,u\rho)))\geq g+1\}$ is nonempty. From now on, it is harmless to assume $k$ is algebraically closed.

Let's choose $D\in|\omega_X|$ such that $D=\sum_{i=1}^{2g-2}P_i$, where $P_i$ are distinct points. Then $$\Gamma(D,\mathcal{O}_{D})=
\oplus_{i=1}^{2g-2} H^0(P_i,\mathcal{O}_{P_i})\cong k^{2g-2}.$$ We show there exists $u=(\lambda_i)_{i=1}^{2g-2}$ with $\lambda_i\in k^*$, such that $\dim_k(\mathrm{Im}(\Gamma(\rho,u\rho)))\geq g+1$.

Let's fix non-vanishing local sections of $\omega_X$ at $P_i$, denoted by $\omega_i$. Then $$\Gamma(\rho,u\rho)\colon \Gamma(X,\omega_X)^{\oplus 2}\to \oplus_{i=1}^{2g-2}\Gamma(P_i,\mathcal{O}_{P_i})$$ is given by $(\Omega_a,\Omega_b)\mapsto \left(f_a(P_i)+\lambda_i f_b(P_i)\right)_i$, where $f_a(P_i)=\frac{\Omega_a(P_i)}{\omega_i(P_i)},f_b(P_i)=\frac{\Omega_b(P_i)}{\omega_i(P_i)}$.

Pick a basis $\{\Omega_i\}_{i=1}^g$ of $\Gamma(X,\omega_X)$. The map $\Gamma(\rho,u\rho)\colon\Gamma(X,\Omega_X)^{\oplus2}\to \oplus_{i=1}^{2g-2} \Gamma(P_i,\mathcal{O}_{P_i})$ can be expressed by the $(2g-2)\times 2g$ matrix $M=[M_L|M_R]$, where $(M_L)_{i,j}=\frac{\Omega_j(P_i)}{\omega_i(P_i)}$ and $(M_R)_{i,j}=\frac{\lambda_i\Omega_j(P_i)}{\omega_i(P_i)}$ for $0\leq i\leq 2g-2,0\leq j\leq g$.

Note that $M_L$ describes the map $\Gamma(\rho)\colon \Gamma(X,\omega_X)\to \oplus_{i=1}^{2g-2}\Gamma(P_i,\mathcal{O}_{P_i})$. This map fits into the exact sequence $0\to k\cong\Gamma(X,\omega_X-D)\to\Gamma(X,\omega_X)\overset{\Gamma(\rho)}{\to} \oplus_{i=1}^{2g-2}\mathrm{H}^0(X,\mathcal{O}_{P_i})$, thus $\mathrm{rank}(M_L)=\dim\mathrm{Im(\Gamma(\rho))}=g-1$. After rearranging the ordering of $P_i$ and replacing $\Omega_i$ by suitable linear combinations, we may assume

\[M_L=
\left[{\begin{array}{cc}
0_{(g-1)\times 1} &I_{g-1} \\
0_{(g-1)\times 1}& A\\
\end{array}}
\right]
\]

Let $L_1=\mathrm{diag}_{i=1}^{g-1}(\lambda_i)$ be the diagonal matrix, let $L_2=\mathrm{diag}_{i=g}^{2g-2}(\lambda_{i})$, let $L_u=\mathrm{diag}(L_1,L_2)$. Then we may write

\[M=
\left[{\begin{array}{cccc}
0_{(g-1)\times 1} &I_{g-1} &0_{(g-1)\times 1} &L_1 \\
0_{(g-1)\times 1}& A &0_{(g-1)\times 1} &L_2A\\
\end{array}}
\right]
\]

Thus $\mathrm{rank}(M)=\dim(\mathrm{Im}(\Gamma(\alpha,u\alpha)))\geq g+1$ if $\mathrm{rank}((L_2^{-1}L_1-I)A)\geq 2$. Let's choose $\lambda_i$ such that $\lambda_i\lambda_{g-1+i}\neq 1$, so that $L_2^{-1}L_1-I$ is invertible. We are done if we know $\mathrm{rank}(A)\geq 2$.

Consider the restriction maps: $$\phi\colon \omega_X\to\omega_X|_{\sum_{i=g}^{2g-2} P_i} \ \ \textrm{and}\ \ \psi\colon\omega_X\to\omega_X|_{\sum_{i=1}^{g-1}P_i}.$$ Our assumption on $M_L$ means $\mathrm{rank}(A)=\mathrm{rank}(\Gamma(\phi))$ and $\mathrm{rank}(\Gamma(\psi))=g-1$.

Rank-nullity formula on $\Gamma(\phi)$ tells us $g-\mathrm{rank}(A)=h^0(\omega_X(-\sum_{i=g}^{2g-2}P_i))$. By Riemann-Roch, this equals to $h^0(\mathcal{O}_X(\sum_{i=g}^{2g-2}P_i))$. On the other hand, rank-nullity formula for $\Gamma(\psi)$ implies that $h^0(\omega_X(-\sum_{i=1}^{g-1}P_i))=1$. Note that this equals to $h^0(\mathcal{O}_X(\sum_{i=g}^{2g-2}P_i))=g-\mathrm{rank}(A)$, as $\omega_X=\sum_{i=1}^{2g-2}P_i$. Since $g\geq 3$, we know $\mathrm{rank}(A)=g-1\geq2$, .\end{proof}

\subsection{The generalized theta divisor}
Let $E$ be a semi-stable sheaf on $X$ obtained as in the last section. Let $U\to\mathrm{Pic}^0_{X/k}$ be an \'etale cover, such that a tautological line bundle $L$ exist on $X\times_k U$. Let $m$ be a positive integer, and $Z\in |m\omega_X|$ be a reduced effective divisor. Let $\pi\colon X\times_k U\to U$ be the projection, let $E_U$ be the constant family. Let $Z_U=Z\times_kU$. Take short exact sequence $$0\to (E_U\otimes L)(-Z_U)\to E_U\otimes L\to (E_U\otimes L)|_{Z_U}\to 0.\eqno{(\dagger)}$$ By cohomology and base change, for $m$ large enough, it induces a long exact sequence, such that the middle two terms are locally free of same rank:
$$
0\to\pi_{*}(E_U\otimes L)\to\pi_*(E_U\otimes L)|_{Z_U}\xrightarrow{\delta_U}R^1\pi_*\left((E_U\otimes L)(-Z_U)\right)\to R^1\pi_*(E_U\otimes L)\to 0.
$$

Let $\eta$ be the generic point of $U$. Since $E$ is semi-stable and $\chi(E)=0$, by \cite[1.6.2]{Ray} we know $h^i(X_{\eta},E_{\eta}\otimes L_{\eta})=0$. Thus $\pi_*(E_U\otimes L)=0$ and $\delta_U$ is injection, its determinant cuts out a divisor in $T_U\subset U$, which descends to the generalized theta divisor $\Theta_E\subset P$.

\subsection{Restriction to $k(\Theta_E)$}
\begin{lemma}
The subscheme $\Theta_E$ is reduced and irreducible.
\end{lemma}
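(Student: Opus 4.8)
The plan is to deduce both reducedness and irreducibility from a single numerical constraint: the class of $\Theta_E$ in $\mathrm{NS}(\mathrm{Pic}^0_{X/k})$, combined with the lattice computation of Lemma~\ref{k}. First I would record that $\Theta_E$ is an honest effective Cartier divisor: it is cut out by the single section $\det(\delta_U)$, and it is a proper subscheme because at the generic point $\eta$ of $U$ one has $h^i(X_\eta, E_\eta\otimes L_\eta)=0$ by \cite[1.6.2]{Ray}, so $\delta_U$ is generically an isomorphism and $\Theta_E\neq \mathrm{Pic}^0_{X/k}$. Since $\mathrm{Pic}^0_{X/k}$ is smooth, the underlying cycle of $\Theta_E$ is a well-defined effective divisor over $k$, and the whole statement reduces to showing this divisor is a single prime divisor over $k$ with multiplicity one.

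The key numerical input is that the class of $\Theta_E$ in $\mathrm{NS}((\mathrm{Pic}^0_{X/k})_{k^{\mathrm{sep}}})=\langle\Theta\rangle$ is exactly $2\Theta$. The divisor $\Theta_E$ is the zero locus of the section of $\det\!\big(R\pi_*(E_U\otimes L)\big)^{-1}$ induced by $\det(\delta_U)$, so its class is $-c_1$ of the determinant-of-cohomology line bundle. This is a topological invariant computed by Grothendieck--Riemann--Roch from the rank and degree of $E$ alone; for a semistable bundle of rank $r$ and slope $g-1$ (so that $\chi(E\otimes L)=0$) it equals $r\Theta$, hence $2\Theta$ in our rank-two case. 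Since the class depends only on the numerical invariants, I would if convenient verify it after base change to $k^{\mathrm{sep}}$, e.g.\ by degenerating $E$ to a direct sum $M_1\oplus M_2$ of degree-$(g-1)$ line bundles, for which $\Theta_{M_1\oplus M_2}=\Theta_{M_1}+\Theta_{M_2}$ visibly has class $\Theta+\Theta=2\Theta$.

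With the class in hand the combinatorics are immediate. Write the underlying divisor as $\Theta_E=\sum_j m_j C_j$, where the $C_j$ are the distinct prime divisors over $k$ appearing in $\Theta_E$ and $m_j\geq 1$. Each $C_j$ is defined over $k$, so $\mathcal{O}(C_j)$ is a line bundle on $\mathrm{Pic}^0_{X/k}$, and Lemma~\ref{k} with $d=0$ (where $\tfrac{2g-2}{\gcd(2g-2,g-1)}=2$) forces its numerical class to be a multiple of $2\Theta$; since $C_j$ is a nonzero effective divisor and $\Theta$ is ample, intersecting with $\Theta^{g-1}$ shows this multiple is positive, say $[C_j]=2a_j\Theta$ with $a_j\geq 1$. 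Comparing coefficients in $[\Theta_E]=\sum_j 2m_ja_j\,\Theta=2\Theta$ yields $\sum_j m_j a_j=1$. As all $m_j,a_j\geq 1$, there is exactly one index, with $m_1=a_1=1$; thus $\Theta_E=C_1$ is reduced and irreducible over $k$, which is what is needed so that $k(\Theta_E)$ is a field and the argument of Section~\ref{sec4} applies verbatim.

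I expect the main obstacle to be the second step, namely pinning the class down to \emph{exactly} $2\Theta$ rather than merely ``a multiple of $2\Theta$''. Lemma~\ref{k} alone only provides the divisibility bound, which would leave open the possibility of a nonreduced or reducible $\Theta_E$ of class $4\Theta$, $6\Theta$, etc.; it is precisely the Grothendieck--Riemann--Roch computation of the determinant-of-cohomology line bundle, crucially using rank $2$ and slope $g-1$, that supplies the exact coefficient and collapses the class equation to the forced solution above.
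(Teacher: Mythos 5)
Your proposal is correct and follows essentially the same route as the paper: the paper likewise cites the fact that $\Theta_E$ is numerically equivalent to $2\Theta$ (via Raynaud [1.8.1], where you propose a GRR/degeneration computation of the determinant of cohomology) and then invokes Lemma~\ref{k} to rule out any decomposition, since every component would have class a positive multiple of $2\Theta$. Your write-up of the combinatorial step $\sum_j m_j a_j = 1$ is in fact more careful than the paper's one-line version.
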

\begin{proof} By \cite[1.8.1]{Ray}, the divisor $\Theta_E$ is numerically equivalent to $2\Theta$. If $\Theta_E$ is reducible or non-reduced, its reduced component will have numerical class $\Theta$, which is not possible, by Lemma \ref{k}.
\end{proof}
\begin{theorem}The class $\alpha_0\in\mathrm{Br}(\mathrm{Pic}^0_{X/k})$ restricts to zero in $\mathrm{Br}(k(\Theta_E))$.
\end{theorem}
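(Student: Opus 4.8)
The plan is to mimic the proof of the theorem in Section \ref{sec4} verbatim, since the authors explicitly announced that ``the proof will be the same as in the previous section, as long as we know $\Theta_E$ is reduced,'' and the preceding lemma has just supplied exactly that fact. The conceptual engine is the combination of Lemma \ref{descend} and Lemma \ref{Brauerann}, together with the observation that once the pushforward defining the theta divisor has generic rank $1$, its projectivization is birational and therefore splits the Brauer class at the generic point.

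Concretely, I would first establish the rank-$1$ statement, the analogue of Lemma \ref{rk1}. Let $\pi'\colon X\times_k T_U\to T_U$ be the restriction of $\pi$ to the locus $T_U\subset U$ cut out by $\det(\delta_U)$, let $L'=L|_{X\times_k T_U}$ and $E'=E_U|_{X\times_k T_U}$, and form the sequence obtained from $(\dagger)$ on $T_U$. The sheaf $\pi'_*(E'\otimes L')$ is torsion free because it is a subsheaf of the locally free $\pi'_*\big((E'\otimes L')|_{Z'}\big)$. To see it has rank $1$, I would run the same determinantal argument: if the corank of the connecting map $\delta'$ exceeded $1$ at a point $t$, then $\det(\delta')$ would lie in $\mathfrak{m}_t^2$, forcing $t$ into the singular locus of $T_U$; but $\Theta_E$ is reduced and irreducible by the lemma just proved, hence regular at its generic point, so the generic corank is exactly $1$.

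Next I would set $\widetilde{T}_U=\mathbb{P}(\pi'_*(E'\otimes L'))$ and descend it to $\widetilde{T}\to T$ along the \'etale cover $T_U\to T$ using Lemma \ref{descend}; the cocycle data come from the tautological comparison isomorphisms $\phi,\beta$ for $L$ restricted to $T_U$, exactly as in Section \ref{4.1}. Then I would invoke Lemma \ref{Brauerann} with $F=\pi'_*(E'\otimes L')$ and $G=E'\otimes L'$ to conclude that $s^*(E'\otimes L')\otimes r^*\mathcal{O}_{\widetilde{T}_U}(-1)$ descends to a coherent sheaf on $X\times_k\widetilde{T}$. The point is that this descended sheaf is a \emph{tautological family} in the appropriate sense: its fiberwise class on $X\times\widetilde{T}$ realizes the universal line bundle twisted by the fixed bundle $E$, so its existence shows that the obstruction class pulls back to zero in $\mathrm{Br}(\widetilde{T})$. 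Finally, since $\pi'_*(E'\otimes L')$ has generic rank $1$, the projectivization $\widetilde{T}\to T$ is birational, so $k(\widetilde{T})=k(T)=k(\Theta_E)$, and the vanishing in $\mathrm{Br}(\widetilde{T})$ descends to $\alpha_0|_{k(\Theta_E)}=0$.

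The subtlety I would watch most carefully is the bookkeeping showing that the descended sheaf genuinely splits $\alpha_0$ rather than some twist of it. In Section \ref{sec4} the coefficient sheaf is $L'$ itself, and the descended object is literally a tautological line bundle; here the coefficient sheaf is $E'\otimes L'$, so I must verify that tensoring by the fixed, $k$-defined bundle $E$ does not perturb the covering/descent data $(\phi,\beta)$ attached to $L$ --- indeed it does not, because $E_U$ is pulled back from $X$ and carries the trivial covering datum, so the isomorphism $\psi$ required in Lemma \ref{Brauerann} for $G=E'\otimes L'$ is just $\mathrm{id}_{E}\otimes(\text{the datum for }L')$ and satisfies the same cocycle condition compatible with $\beta$. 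Hence the class split on $\widetilde{T}$ is precisely $\alpha_0$, and the argument closes.
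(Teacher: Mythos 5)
Your overall strategy is exactly the paper's: establish the rank-one analogue of Lemma \ref{rk1} using reducedness of $\Theta_E$, projectivize $\pi'_*(E'\otimes L')$, descend via Lemma \ref{descend}, apply Lemma \ref{Brauerann}, and finish by birationality of $\widetilde{T}\to T$. The one step that does not close as written is the final application of Lemma \ref{Brauerann} with $G=E'\otimes L'$. That choice produces a \emph{rank two} coherent sheaf on $X\times_k\widetilde{T}$, and the existence of such a sheaf does not by itself imply $\alpha_0|_{\widetilde{T}}=0$: a descended family of the form ``$E\otimes(\text{tautological bundle})\otimes\mathcal{O}(-1)$'' of rank $2$ only bounds the index (taking determinants, for instance, yields a line bundle inducing twice the tautological class, hence only $2\,\alpha_0|_{\widetilde{T}}=0$). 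The Leray-spectral-sequence criterion that defines $\alpha_0$ requires a tautological \emph{line} bundle on $X\times_k\widetilde{T}$ to conclude vanishing.

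The fix is immediate from an observation you already make: since $E_U$ is pulled back from $X$ it carries the trivial covering datum, so $L'$ itself has the same $N$-twisted covering datum as $F=\pi'_*(E'\otimes L')$. Therefore apply Lemma \ref{Brauerann} with $F=\pi'_*(E'\otimes L')$ but $G=L'$ (not $E'\otimes L'$): the twists cancel exactly as in the theorem of Section \ref{sec4}, and $s^*L'\otimes r^*\mathcal{O}_{\widetilde{T}_U}(-1)$ descends to an honest tautological line bundle on $X\times_k\widetilde{T}$. This is what the paper's terse ``we then conclude as in'' the Section \ref{sec4} argument amounts to, where the roles are $F=\pi'_*(L')$ and $G=L'$; the only role of $E$ is to build the correct $F$ whose projectivization is birational onto $\Theta_E$. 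With that substitution your argument is complete and agrees with the paper's.
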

\begin{proof} Restrict the exact sequence $(\dagger)$ to $X\times T_U$. Denote the restriction of $E_U, L$ and $Z_U$ by $E',L'$ and $Z'$. Let $\pi'\colon X\times T_U\to T_U$ be the projection. We have exact sequence
$$0\to\pi^{\prime}_{*}(E'\otimes L')\to\pi^{\prime}_*(E'\otimes L')|_{Z'}\xrightarrow{\delta_U}R^1\pi^{\prime}_*\left((E'\otimes L')(-Z')\right)\to R^1\pi^{\prime}_*(E'\otimes L')\to 0.$$

By the same proof in Lemma \ref{rk1}, the reducedness of $\Theta_E$ implies the generic rank of $\pi^{\prime}_{*}(E'\otimes L')$ is $1$. We then conclude as in Theorem \ref{Brauerann}.
\end{proof}

\bibliographystyle{alpha}
\bibliography{references}

\end{document}